\documentstyle[amssymb,amsfonts,12pt]{amsart}
\newtheorem{theorem}{Theorem}[section]
\newtheorem{lemma}[theorem]{Lemma}

\newtheorem{proposition}[theorem]{Proposition}
\newtheorem{definition}[theorem]{Definition}

\theoremstyle{remark}
\newtheorem{remark}[theorem]{Remark}

\textwidth16cm
\topmargin0cm
\oddsidemargin0cm
\evensidemargin0cm
\textheight22.5cm
\def\QSet{\mbox{\rm\kern.24em
\vrule width.03em height1.48ex depth-.051ex \kern-.26em Q}}


\def\E{{\mbox{\rm I\kern-.22em E}}}
\def\P{{\bf P}}
\def\D{{\mathcal D}}
\def\T{{\bf T}}
\def\S{{\bf S}}
\def\Z{{\bf Z}}
\def\R{{\bf R}}

\def\O{{\bf O}}
\def\V{{\bf U}}
\def\BMO{{\operatorname{BMO}}}
\def\be#1{\begin{equation}\label{#1}}

\def\size{{\operatorname{size}}}
\def\diam{{\operatorname{diam}}}

\def\H{{\mathcal H}}
\def\Y{{\mathcal R}}
\def\BMO{{\operatorname{BMO}}}
\def\F{{\mathcal F}}

\def\G{{\mathcal G}}

\def\dist{{\operatorname{dist}}}

\def\bas{\begin{align*}}
\def\eas{\end{align*}}
\def\bi{\begin{itemize}}
\def\ei{\end{itemize}}
\newenvironment{proof}{\noindent {\bf Proof} }{\endprf\par}
\def \endprf{\hfill  {\vrule height6pt width6pt depth0pt}\medskip}
\def\emph#1{{\it #1}}

\begin{document}
\title{Multilinear singular operators with fractional rank}

\author{Ciprian Demeter}
\address{Department of Mathematics, Indiana University, 831 East 3rd St., Bloomington IN 47405}
\email{demeterc@@indiana.edu}

\author{Malabika Pramanik}
\address{Department of Mathematics, University of British Columbia, Vancouver, BC CANADA V6T 1Z2}
\email{malabika@@math.ubc.ca}

\author{Christoph Thiele}
\address{Department of Mathematics, UCLA, Los Angeles CA 90095-1555}
\email{thiele@@math.ucla.edu}

\keywords{Multilinear singular integral operators; fractional rank}
\thanks{ AMS subject classification: Primary 42B20}
\begin{abstract}We prove bounds for multilinear operators on $\R^d$ given by multipliers which are singular along a $k$ dimensional subspace. The new case of interest is when the rank $k/d$ is not an integer. Connections with the concept of {\em true complexity} from Additive Combinatorics are also investigated.
\end{abstract}

\maketitle

\tableofcontents

\section{Introduction}

Let $n\ge 3$ and $d\ge 1$. We consider a multiplier $M(\vec{\xi}^{(1)},\ldots,\vec{\xi}^{(n)})$ on the vector space
$$\Gamma:=\{\vec{\xi}:=(\vec{\xi}^{(1)},\ldots,\vec{\xi}^{(n)})\in (\R^d)^n: \sum_{i=1}^{n}\vec{\xi}^{(i)}=\vec{{\bf 0}}\}.$$
This gives rise to the multi-linear operator on $n-1$ functions on $\R^d$
$$T(F_1,\ldots,F_{n-1})\widehat\;(-\vec{\xi}^{(n)})=\int\delta(\vec{\xi}^{(1)}+\ldots+\vec{\xi}^{(n)})M(\vec{\xi})\widehat{F_1}(\vec{\xi}^{(1)})\cdots\widehat{F_{n-1}}(\vec{\xi}^{(n-1)}) d\vec{\xi}^{(1)}\ldots d\vec{\xi}^{(n-1)}.$$

We will prove the following
\begin{theorem}
\label{ttt1}
Let $\Gamma'$ be a generic linear subspace of $\Gamma$ of dimension $k\ge 0$, and assume
\begin{equation}
\label{reqcc1}
0\le k/d<n/2.
\end{equation}
Assume the multiplier $M:\Gamma\to\R$ satisfies
\begin{equation}
\label{reqcc2}
|\partial^{\alpha}M(\vec{\xi})|\lesssim \dist(\vec{\xi},\Gamma')^{-|\alpha|},
\end{equation}
for all partial derivatives up to some finite order. Then
$$T:L^{p_1}\times\cdots L^{p_{n-1}}\to L^{p_n'},$$
whenever $2<p_i\le \infty$ for each $1\le i\le n$ and
\begin{equation}
\label{homoghjgxjhg}
\frac{1}{p_1}+\ldots+\frac{1}{p_n}=1,
\end{equation}
where $p_n'$ is the conjugate exponent to $p_n$.
\end{theorem}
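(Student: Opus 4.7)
The approach would follow the time-frequency / wave-packet philosophy developed for multilinear singular integrals of integer rank (Coifman--Meyer and its higher-modulation extensions by Muscalu--Tao--Thiele), adapted to accommodate a fractional rank singular subspace.

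\emph{Step 1: Whitney decomposition.} Decompose $\Gamma\setminus\Gamma'$ into Whitney cubes $B$ with $\diam(B)\sim\dist(B,\Gamma')$. By (\ref{reqcc2}), the restriction of $M$ to each such $B$ is a smooth symbol at its natural scale, and a Fourier series expansion on $B$ writes $M$ as a rapidly convergent sum of tensor products of functions of the individual variables $\vec\xi^{(i)}$. Substituting back into the definition of $T$ realizes $T$ as a superposition of discrete model operators indexed by collections of ``multi-tiles'', each multi-tile encoding a spatial cube and a frequency box whose geometry reflects the position of $B$ relative to $\Gamma'$.

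\emph{Step 2: The model operator.} The model takes the form
\[
\Lambda(F_1,\ldots,F_n)=\sum_{P} c_P\prod_{i=1}^{n}\langle F_i,\phi_{P,i}\rangle,
\]
where the $\phi_{P,i}$ are $L^2$-normalized wave packets adapted to the $i$-th tile of $P$. When $k/d$ is fractional, the relevant tiles are anisotropic: essentially cube-like along the directions in which $\Gamma'$ projects surjectively onto the $i$-th copy of $\R^d$, and elongated along the transverse directions. One groups multi-tiles into \emph{trees} and bounds each tree contribution by BMO/Carleson-type inequalities combined with size and energy estimates, as in the classical framework.

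\emph{Step 3: Closing the estimate.} For $2<p_i\le\infty$, $L^2$-based tree estimates (John--Nirenberg on the BMO side, quasi-orthogonality on the $L^2$ side) suffice to yield the desired bound once the Whitney sum is carried out. The constraint $k/d<n/2$ in (\ref{reqcc1}) is precisely what ensures convergence: each dimension of $\Gamma'$ produces a loss of a factor of the tile scale, while each pair of factors $F_i$ placed in $L^2$ contributes a quasi-orthogonality gain, and $n/2$ such pairs are available under (\ref{homoghjgxjhg}).

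\emph{Main obstacle.} The principal new difficulty is combinatorial. In the integer rank case the model tiles are isotropic and the tree combinatorics is well understood, but when $k/d$ is fractional the tiles are anisotropic boxes whose exact dimensions depend on how $\Gamma'$ sits inside $\Gamma$. Setting up the correct notion of tree, verifying almost-orthogonality of wave packets across these anisotropic scales, and proving the analogues of the size/energy lemmas in this setting (in particular, showing that the BMO-type Carleson embedding is not destroyed by the fractional transversality) is the technical heart of the proof; the remaining pieces (Whitney decomposition, Fourier expansion, interpolation with the trivial bound at one endpoint) are comparatively routine.
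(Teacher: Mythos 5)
Your Steps 1--2 (Whitney decomposition of $\R^{nd}\setminus\Gamma'$, Fourier expansion of $M$ on each Whitney piece, reduction to a discrete wave-packet model, organization into trees) do match the paper's reduction, but two of your structural claims are off, and the second one hides the actual gap. First, the tiles in the paper are \emph{not} anisotropic: the Whitney cubes are genuine cubes and each frequency component $\omega_{s_i}$ is a $d$-dimensional cube of the same scale; the fractional rank never appears in the tile geometry. It appears instead in the combinatorial rank properties of the multi-tiles: any $m=\lceil k/d\rceil$ frequency components determine the rest, plus a further generic property (the paper's (r7), coming from unique solvability of linear systems built from the parametrizing maps $G_i$ of $\Gamma'$, as in \eqref{solvablesystemchangedputinfront}). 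Second, your Step 3 heuristic --- ``$n/2$ pairs placed in $L^2$ give quasi-orthogonality gains'' --- is not how the estimate can close: per vector tree one only has two lacunary indices yielding $\ell^2$ gains (Lemma \ref{vecttreeest}), and what must be controlled is the \emph{counting function of vector-tree tops} in terms of the counting functions of the single-index tree families.

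This is where the genuine missing idea lies. The integer-rank machinery you invoke gives, for the number of vector trees sitting above a point, only the bound $\bigl(\prod_i N_{\F_i}\bigr)^{m/n}$, because $m$ projections determine a multi-tile. When $m\ge n/2$ --- exactly the new regime of Theorem \ref{ttt1}, e.g.\ $d=2$, $k=3$, $n=4$ --- the exponent $m/n\ge 1/2$ and the tree sums do not converge; crudely majorizing the rank $k/d$ by $m$ fails. The paper's essential new contribution (Section \ref{sec:counting}) is to exploit the gap between $k/d$ and $m$ for generic $\Gamma'$: using the additional rank property (r7) together with the Katz--Tao Cauchy--Schwarz lemma (Lemma \ref{lemakatztao}), and after a two-stage Fefferman-type reorganization of the tiles into vector trees whose ``standard projections'' are well defined (so that injectivity statements like Proposition \ref{knowmknowall} hold), one proves $N_{\F}\lesssim\bigl(\prod_i N_{\F_i}\bigr)^{\delta}$ with $\delta<1/2$, with exponents summing essentially to $k/d$ rather than $m$. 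Nothing in your proposal produces an exponent below $1/2$ when $m\ge n/2$, nor uses the genericity of $\Gamma'$ beyond graph-parametrizability, so as written your plan would only recover the case $m<n/2$ already contained in \cite{MTT1}; meanwhile the obstacle you flag as the technical heart (almost orthogonality across anisotropic scales) does not arise in the paper at all.
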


The generic character of $\Gamma'$ in Theorem \ref{ttt1} is understood with respect to the Lebesgue measure (for example). In fact, we will need $\Gamma'$ to satisfy some precise non-degeneracy conditions, and they are generically satisfied. To give the reader a grasp on what these conditions amount to, we will describe them now in the case $d=2$. The case $d>3$ involves very similar considerations and will be described in Section \ref{secd>2}.

There will be two sets of non-degeneracy requirements. The first one is that $\Gamma'$ can be parameterized by any $k$ of the canonical variables. We will work a lot with the following parametrization.
For $1\le i\le n$, let $G_i:\R^k\to\R^d$ be the linear functions such that
$$G_1(\xi_1,\ldots,\xi_k)=(\xi_1,\ldots,\xi_d)$$
$$G_2(\xi_1,\ldots,\xi_k)=(\xi_{d+1},\ldots,\xi_{2d})$$
$$\ldots\ldots\ldots\ldots$$
$$G_{[k/d]+1}(\xi_1,\ldots,\xi_k)=(\xi_{d[k/d]+1},\ldots,\xi_k,\ldots)$$
where the last entries of $G_{[k/d]+1}$ (the ones after the $\xi_k$ entry) and the entries of the remaining $G_i$ ($[k/d]+1\le i\le n$) are uniquely determined by the requirement that the function
$$G_1\times\ldots\times G_n$$
maps into $\Gamma'$.

Let $m$ be the smallest integer that is greater than or equal to $k/d$.
We will use the notation $\vec{\xi}^{(i)}:=(\xi_1^{(i)},\ldots,\xi_k^{(i)})\in\R^k$. Let $i_1,\ldots,i_k\in\{1,\ldots,n\}$ be pairwise distinct indices. Consider the following system of ($k$ vector, or equivalently $2k$ scalar) linear equations in $2k$ variables $\xi_1^{(1)},\ldots,\xi_k^{(2)}\in\R$, and coefficients $v_j\in\R^2$.
\begin{equation}
\label{solvablesystemchangedputinfront}
\begin{cases} G_{i_1}(\vec{\xi}^{(1)})-G_{i_1}(\vec{\xi}^{(2)})=v_1
\\ G_{i_2}(\vec{\xi}^{(1)})=v_2\\ \ldots \\ G_{i_m}(\vec{\xi}^{(1)})=v_{m}\\ G_{i_{m+1}}(\vec{\xi}^{(2)})=v_{m+1}\\ \ldots \\ G_{i_k}(\vec{\xi}^{(2)})=v_{k}
\end{cases}
\end{equation}

When $d=2$, Theorem \ref{ttt1} has the following precise formulation.

\begin{theorem}
\label{ttt1d-=2}
Let $d=2$ and let $\Gamma'$ be a linear subspace of $\Gamma$ of dimension $k\ge 0$. Assume $\Gamma'$ is the graph over every $k$ of the canonical variables. Moreover, assume that the system \eqref{solvablesystemchangedputinfront} has a unique solution (for each choice of $v_i$) for each pairwise distinct $i_1,\ldots,i_k\in\{1,\ldots,n\}$. If the remaining hypotheses from Theorem \ref{ttt1} are satisfied, then its conclusion will hold.
\end{theorem}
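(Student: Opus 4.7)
The plan is to reduce Theorem~\ref{ttt1d-=2} to a discrete model sum via a time-frequency decomposition adapted to $\Gamma'$, and then to control the sum by size and energy methods in the style of Muscalu--Tao--Thiele. I would begin by decomposing $M$ with a smooth partition of unity subordinate to dyadic neighborhoods of $\Gamma'$, writing $M=\sum_{\ell\ge 0} M_\ell$ where $M_\ell$ is supported on $\{\vec{\xi}\in\Gamma:\dist(\vec{\xi},\Gamma')\sim 2^{-\ell}\}$ and smooth at scale $2^{-\ell}$. A Fourier series expansion in the directions transverse to $\Gamma'$ (which have dimension $nd-d-k$) decouples each $M_\ell$ into a rapidly convergent sum of tensor-type symbols. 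Parametrizing $\Gamma'$ by $\R^k$ via $G_1\times\cdots\times G_n$ and performing a standard wave-packet discretization on $\R^d$, I would obtain a model form
\[
\Lambda(F_1,\ldots,F_n)=\sum_{P\in\mathbf{P}} a_P \prod_{i=1}^n \langle F_i,\phi_{P,i}\rangle,
\]
where $|a_P|\lesssim 1$, the multi-tiles $P\in\mathbf{P}$ are parametrized by dyadic data on $\Gamma'$ together with a physical-space cube on $\R^d$, and each $\phi_{P,i}$ is an $L^2$-normalized wave packet on $\R^d$ with frequency center $G_i(\xi_P)$.

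I would then organize $\mathbf{P}$ into trees, defined as subcollections of multi-tiles whose frequency components are nested, and into forests of disjoint trees; introduce size and energy functionals for the coefficients $\langle F_i,\phi_{P,i}\rangle$; and prove single-tree estimates together with a selection algorithm that bounds $|\Lambda|$ by products of sizes and energies, hence by the desired $L^{p_i}$ norms. The hypothesis that the system \eqref{solvablesystemchangedputinfront} is uniquely solvable for every choice of pairwise distinct $i_1,\ldots,i_k$ is central here: it makes the notion of a tree well-defined independently of which $k$ of the $G_i$'s are chosen as frequency coordinates, and it provides the non-degeneracy required for TT*-type arguments in which one pairs two factors $\langle F_i,\phi_{P,i}\rangle$ and $\langle F_j,\phi_{P,j}\rangle$ and exploits that the combined frequency shift $G_i(\xi_P)-G_j(\xi_P)$ varies invertibly in $\xi_P$.

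The principal obstacle is the single-tree estimate: once restricted to a single tree, one faces a multilinear form parametrized by $k$ continuous variables, and both hypotheses enter together in a delicate way. The solvability of \eqref{solvablesystemchangedputinfront} supplies the analytic non-degeneracy needed to carry out Bessel/TT* on pairs of tensor factors, while the rank condition $k/d<n/2$, combined with $p_i>2$, guarantees that sufficiently many such pairings are available to absorb all $k$ continuous parameters while still leaving at least one factor free to be controlled by a BMO-type size bound. Once this single-tree estimate is in place, the summation in $\ell$ is a geometric series thanks to the Fourier-coefficient decay from the smoothness of $M_\ell$ at scale $2^{-\ell}$, and the full range of exponents $(p_1,\ldots,p_n)$ in the conclusion follows by multilinear interpolation.
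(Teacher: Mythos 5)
Your overall frame --- Whitney decomposition of the multiplier around $\Gamma'$, wave-packet discretization, organization into trees with size-type functionals --- matches the paper's reduction to the model sum (Theorem \ref{redu2}), but the heart of the matter is missing, and you place the difficulty in the wrong spot. The single (vector) tree estimate is essentially trivial here: within one vector tree one applies H\"older with an $\ell^2$ bound on two lacunary indices and $\ell^\infty$ on the rest (Lemma \ref{vecttreeest}); no delicate absorption of the $k$ continuous parameters happens at that stage. The genuine obstacle is combinatorial: after the size decomposition one must count how many vector trees can be assembled from the given families of $i$-trees, i.e.\ bound the counting function of their tops pointwise by products of the individual counting functions. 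The size/energy selection machinery you describe is exactly the argument of \cite{MTT1}, and by itself it only yields counting bounds governed by the integer $m=\lceil k/d\rceil$ (any $m$ frequency components determine the rest), giving exponents $m/n$. That suffices when $m<n/2$, but the new case of this theorem (e.g.\ $d=2$, $k=3$, $n=4$, so $m=2=n/2$) produces exponents $\ge 1/2$, and the paper's single-scale heuristics with quadratic phases shows bounds with exponent $1/2$ are sharp and useless for the summation over scales; so the route you outline would only reprove the already known regime.

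What is needed, and what your proposal does not contain, is the mechanism by which the unique solvability of the system \eqref{solvablesystemchangedputinfront} --- with its $2k$ scalar unknowns split between \emph{two} frequency points --- upgrades the exponent from $m/n$ to roughly $(k/d)/n<1/2$. In the paper this hypothesis is converted into the rank property (r7), and it is exploited not through a $TT^*$ pairing of two factors inside a tree, but through the Katz--Tao lemma (Lemma \ref{lemakatztao}): one looks at pairs of vector trees sharing the same first standard projection and proves that the map $H$ sending such a pair to the projections $f_2,\dots,f_m$ of the first tree together with $f_{m+1},\dots,f_k$ of the second is injective (Propositions \ref{p:skhxghduyetd78436r2843908} and \ref{knowmknowall}), which yields $N_{\F^{**}}\lesssim(\prod_{j=1}^k N_{\F_j})^{1/2}$ and, after permuting indices and using $k<n$, an exponent $\delta<1/2$. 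Carrying this out requires a two-stage reshuffling of the multi-tiles into vector trees (Fefferman's trick, with the single-multi-tile trees treated separately using the relations $\lesssim$ and $\le$), none of which appears in your outline. Moreover, your stated uses of the solvability hypothesis --- making trees well defined independently of the coordinate choice (that is the role of the ``graph over any $k$ canonical variables'' assumption) and providing invertibility for a Bessel/$TT^*$ pairing --- do not substitute for this counting argument, so the proposal has a genuine gap precisely at the point where the fractional rank $k/d<n/2$ must be used.
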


It is not hard to see that the requirements in Theorem \ref{ttt1d-=2} are satisfied for a generic $\Gamma'$. The assumption on the compatibility of the system \eqref{solvablesystemchangedputinfront} is one of many (similar in style) that work for our approach, and are guaranteed to hold generically. Various other possible alternative assumptions will become apparent from our later analysis. The minimal non-degeneracy conditions that are needed for Theorem \ref{ttt1} (or even for our approach) are probably very hard to find, and beyond the goal of this paper.  We point out however that if $\Gamma'$ is degenerate in the sense that it fails to be the graph over {\em some particular} choice of $k$ canonical variables, the analysis of the operator $T$ complicates to a significant extent. This has been observed and investigated in \cite{DT}, in the case $d=2$.

Theorem \ref{ttt1} was proved in \cite{MTT1} in the case $d=1$, so our theorem is only new in the case $d\ge 2$. There, the theorem is proved under just the first non-degeneracy assumption, that $\Gamma'$ is the graph over any $k$ of the canonical variables. The result in \cite{MTT1} is proved for a larger class of indices $p_i$. To simplify our exposition, we choose to prove our theorem in the {\em locally} $L^2$ case $p_i>2$.

A key parameter for our analysis is $m$, introduced earlier. We will refer to $k/d$ as the {\em rank} of the operator. When this rank is an integer (and thus equal to $m$), or more generally, when $m<n/2$, Theorem \ref{ttt1} will follow by a pretty straightforward adaptation of the argument in \cite{MTT1} to the $d-$dimensional setting. The novelty here is that  $k/d$ can be fractional and sufficiently close to $n/2$ to allow for the "bad" case $m\ge n/2$. A first new case of interest where our theorem is applicable is when $d=2$, $k=3$, $n=4$.

A simplified version of our approach also gives an alternative proof to the result in \cite{MTT1} (the $d=1$ case), at least in the case $p_i>2$ (see Section \ref{$d=1$}). Our proof and that from \cite{MTT1} share much of the analytic part of the argument. The proof in \cite{MTT1} however is structured around an induction on $k$ that is not available in the fractional rank case. We eliminate the induction from the argument, and treat all $k$ in a similar fashion. This new type of approach will involve a rather delicate combinatorics.

Theorem \ref{ttt1} also has a kernel formulation:

\begin{theorem}
\label{ttt1xsnxjkjk}
Let $K:\R^{d(n-1)-k}\to \R$ be a Calder\'on-Zygmund kernel, and let $l_i:\R^{d(n-1)-k}\to \R^d$ be $n-1$ generic linear forms. Let also $p_i$ be as in Theorem \ref{ttt1}, and assume $0\le k/d<n/2$. For Schwartz functions $F_1,\ldots,F_n:\R^d\to \R$ define the multilinear operator
$$T(F_1,\ldots,F_{n-1})(\vec{x}):=\int_{\R^{d(n-1)-k}} \prod_{i=1}^{n-1}F_i(\vec{x}+l_i(\vec{t}))K(\vec{t})d\vec{t},\;\;\vec{x}\in \R^d.$$
Then $T$ extends to a bounded operator
$$T:L^{p_1}\times\cdots L^{p_{n-1}}\to L^{p_n'}.$$
\end{theorem}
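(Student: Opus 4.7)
The plan is to reduce Theorem \ref{ttt1xsnxjkjk} to Theorem \ref{ttt1} via the standard kernel-to-multiplier correspondence under Fourier transform. Writing each $F_i(\vec{x}+l_i(\vec{t}))$ through Fourier inversion in $\vec{x}$, interchanging orders of integration, and carrying out the $\vec{x}$-integration produces a $\delta$-factor enforcing $\sum_{i=1}^{n}\vec{\xi}^{(i)}=\vec{\mathbf{0}}$. The remaining $\vec{t}$-integral, after using the identity $\sum_{i=1}^{n-1}\vec{\xi}^{(i)}\cdot l_i(\vec{t})=\vec{t}\cdot\sum_{i=1}^{n-1}l_i^{*}(\vec{\xi}^{(i)})$, yields $\widehat K$ evaluated at a linear function of $\vec{\xi}$. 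Thus $T$ is precisely the multilinear operator of Theorem \ref{ttt1} with symbol
$$M(\vec{\xi})=\widehat{K}\Bigl(\sum_{i=1}^{n-1}l_i^{*}(\vec{\xi}^{(i)})\Bigr),$$
where $l_i^{*}:\R^d\to\R^{d(n-1)-k}$ denotes the transpose of $l_i$.

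Next I would verify that $M$ satisfies \eqref{reqcc2} relative to an appropriate $k$-dimensional subspace of $\Gamma$. Define
$$\Gamma':=\Bigl\{\vec{\xi}\in\Gamma:\sum_{i=1}^{n-1}l_i^{*}(\vec{\xi}^{(i)})=0\Bigr\}.$$
For generic linear forms $l_i$, the map $\Psi:\Gamma\to\R^{d(n-1)-k}$ sending $\vec{\xi}$ to $\sum l_i^{*}(\vec{\xi}^{(i)})$ is surjective by a rank count ($\dim\Gamma=d(n-1)$ against codomain dimension $d(n-1)-k$), so $\dim\Gamma'=k$ and $|\Psi(\vec{\xi})|\simeq\dist(\vec{\xi},\Gamma')$ on $\Gamma$. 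Since $K$ is Calder\'on--Zygmund, $\widehat K$ is a H\"ormander--Mikhlin multiplier on $\R^{d(n-1)-k}$ satisfying $|\partial^{\alpha}\widehat K(\eta)|\lesssim|\eta|^{-|\alpha|}$, and the chain rule applied to $M=\widehat{K}\circ\Psi$ yields the required estimates \eqref{reqcc2}.

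The final step is to confirm that a generic $(l_1,\ldots,l_{n-1})$ produces a $\Gamma'$ that is generic in the sense of Theorem \ref{ttt1}. Using the coordinates $(\vec{\xi}^{(1)},\ldots,\vec{\xi}^{(n-1)})$ on $\Gamma$, every linear map $\Gamma\to\R^{d(n-1)-k}$ is of the form $\sum L_i\vec{\xi}^{(i)}$, so taking $l_i=L_i^{*}$ shows every $k$-plane in $\Gamma$ arises as $\ker\Psi$ for some $(l_i)$; hence the parametrization $(l_i)\mapsto\Gamma'$ surjects onto the relevant Grassmannian. The non-degeneracy conditions appearing in Theorem \ref{ttt1} (graph over any $k$ canonical variables, unique solvability of \eqref{solvablesystemchangedputinfront} when $d=2$, and their higher-dimensional analogues) cut out a Zariski-open subset of that Grassmannian, whose preimage in the parameter space has full measure. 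Theorem \ref{ttt1} then applies to this $\Gamma'$ and $M$ and delivers the boundedness claim.

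The analytic heavy lifting is packaged entirely inside Theorem \ref{ttt1}; the only conceptual point in the reduction is the algebraic-geometric verification that generic choices of $l_i$ yield an admissible $\Gamma'$. This is a soft argument, so no genuine obstacle arises beyond bookkeeping.
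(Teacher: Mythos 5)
Your reduction is correct and is essentially the paper's own (implicit) route: the paper states Theorem \ref{ttt1xsnxjkjk} as an immediate consequence of Theorem \ref{ttt1} via the standard kernel-to-multiplier correspondence, with $M=\widehat K\circ\Psi$ singular along $\Gamma'=\ker\Psi\cap\Gamma$ and genericity of $(l_i)$ transferring to genericity of $\Gamma'$. Your dimension count, the verification of \eqref{reqcc2} from the Mikhlin bounds on $\widehat K$, and the soft Grassmannian surjectivity argument are exactly the bookkeeping the paper leaves to the reader.
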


In the case $k>0$, the operators we investigate will typically have some modulation invariance. As a consequence, proving their boundedness will involve time-frequency analysis similar to the one in the proof of the Bilinear Hilbert Transform (\cite{LT1}, \cite{LT2}).

The assumption $k/d<n/2$ is crucial to our analysis. It can be shown in particular to guarantee that $T$ has no symmetries of higher order (i.e quadratic symmetries). On the other hand, even in the one dimensional case, the quadratic  symmetries may be\footnote{These symmetries are however not {\em guaranteed} to exist. In \cite{GW} there are examples in the case $d=1$, $k=n/2$ which do not have any quadratic or higher order symmetries} present when $k\ge n/2$. Perhaps the most famous example with $k/d=n/2$ is  the Trilinear Hilbert Transform ($n=4$, $k=2$, $d=1$)
\begin{equation}
\label{ejfyewywuiyuieyui1}
T(F_1,F_2,F_3)(x)=\int_{\R}F_1(x+t)F_2(x-t)F_3(x+2t)\frac{dt}{t}.
\end{equation}
Another important example with $k/d=n/2$ is the degenerate two dimensional Bilinear Hilbert Transform ($n=3$, $k=3$, $d=2$)
\begin{equation}
\label{ejfyewywuiyuieyui2}
T(F_1,F_2)(x,y)=\int_{\R}F_1(x+t,y)F_2(x,y+t)\frac{dt}{t},
\end{equation}
where the quadratic symmetries are not singled out, but rather part of an infinite group of symmetries generated by degeneracies (See \cite{DT} for details). The current techniques do not seem enough to address the case $k/d=n/2$, where it is likely that some form of "Quadratic Fourier Analysis" will play a role. A quick single scale heuristics is provided in Section \ref{Single scale heuristics}. However, Theorem \ref{ttt1} above shows that they can address the case of a rank $k/d$ arbitrarily close to (and smaller than) $n/2$. This paper grew partly as an attempt to get more light on these issues.

A second motivation for the considerations in this paper comes from connections with Additive Combinatorics, in particular with the issue of {\em true complexity} of a system of linear equations. Our analysis makes the point that nondegenerate systems characterized by $k/d<n/2$ have true complexity 1, in the language from \cite{GW}. These things are described in Section \ref{Single scale heuristics}. The single scale heuristics provided there sheds a lot of light on the difficulties we encounter in the multi-scale context, and we encourage the reader to go over that section first.

To investigate the boundedness properties of the operator in Theorem \ref{ttt1}, it will be convenient to work with the dualized form defined by
$$\Lambda(F_1,\ldots,F_n):=\int_{\R^d} T(F_1,\ldots,F_{n-1})(\vec{x})F_n(\vec{x})d\vec{x}=$$
$$\int\delta(\vec{\xi}^{(1)}+\ldots+\vec{\xi}^{(n)})M(\vec{\xi})\widehat{F_1}(\vec{\xi}^{(1)})\cdots\widehat{F_{n}}(\vec{\xi}^{(n)}) d\vec{\xi}.$$
We will show that
$$|\Lambda(F_1,\ldots,F_n)|\lesssim \prod_{i=1}^{n}\|F_i\|_{p_i}.$$

In the next section we will discretize the form and convert the problem to the boundedness of a model sum operator. We will use wave packets and multi-dimensional boxes (called tiles) to serve as their Heisenberg boxes. The tiles are then organized into trees, and eventually into certain products of trees, called vector trees. Most of the argument is then devoted to estimating the counting function associated with these vector trees. This is the main new contribution of our paper, and makes the object of Section \ref{sec:counting}.

We will assume $m\ge 2.$
The case $m=0$ (i.e. $k=0$) is entirely classical and goes back to the work of Coifman and Meyer. No modulation symmetries are present in this case.  The case $m=1$ can be addressed by the argument in \cite{MTT1}, by crudely majorizing the rank $k/d$ by $m$. Indeed, since $1=m<3/2\le n/2$, we could treat the operator as if it had rank $m\in\Z$. Alternatively, one can apply the argument from section \ref{$d=1$} here.

We would like to thank Tamara Kucherenko and  Camil Muscalu for helpful discussions on the subject.

The first author acknowledges support by a Sloan Research Fellowship and from NSF grant DMS-0556389.

\section{Discretization}

From now on, the notation $|\cdot|$ will refer to the cardinality of a finite set, the length of an interval or the volume of a multi-dimensional interval, depending on the context. The side length of a cube $R$ will be denoted with $l(R)$. The discretization procedure in this section is very similar to the one from \cite{MTT1}. We omit most of the details.

We will work with the constants
$$1<<C_0<<C_1<<C_2<<C_3<<C_4,$$
whose values will not be specified explicitly, but will rather be clear from the context.
The constant $C_0$ will be chosen first. It will be large enough depending on $\Gamma'$, $n$ and $d$. Then $C_1$ will be chosen large enough, depending on $\Gamma'$ but also on the choice of $C_0$. Then $C_2$, $C_3$ and $C_4$ are chosen in this order, sufficiently large compared to their predecessor in the sequence. $C_4$ will be an integral power of $2.$ No upper bounds will be forced upon $C_i$ in terms of $C_{i-1}$, so when some $C_i$ is selected, it can be chosen as large as desired. The fact that these constants will depend on $\Gamma'$ will be reflected in the fact that the bounds in Theorem  \ref{redu1} below also depend on $\Gamma'$. This dependence will be ignored.

Let $\bar{\O}$ be a finite collection of $nd-$dimensional cubes $\bar{\omega}=\bar{\omega_1}\times\ldots\times\bar{\omega_n}$, where each $\bar{\omega_i}$ is a $d-$dimensional cube. These cubes are a sparse enough subcollection of a Whitney decomposing the frequency space $\R^{nd}\setminus \Gamma'$. They will serve to localize various pieces of the multiplier $M$. For each $1\le i\le n$ define the projections
$$\bar{\O}_i:=\{\bar{\omega_i}:\bar{\omega}\in \bar{\O}\}.$$
These collections will satisfy the following properties:
\begin{itemize}
\item (i) (separation in scale) For each $\bar{\omega_1}\times\ldots\times\bar{\omega_n}\in \bar{\O}$, there is a $l\in\Z$ such that
$$|\bar{\omega_1}|=\ldots=|\bar{\omega_n}|=(C_4)^l$$
\item (ii) (separation in distance) If $\bar{\omega_i}\not=\bar{\omega_i'}\in \bar{\O}_i$ and $|\bar{\omega_i}|=|\bar{\omega_i'}|$ then $\dist(\bar{\omega_i},\bar{\omega_i'})\ge C_4|\bar{\omega_i}|$
\item (iii) (Whitney property) For each $\bar{\omega}\in \bar{\O}$ we have
$$10^{-1}C_0\diam (\bar{\omega})\le \dist(\bar{\omega},\Gamma')\le 10C_0\diam (\bar{\omega})$$
\item (iv) (rank $m$) Any $m$ of the $n$ components of some  $\bar{\omega}\in\bar{\O}$ determine uniquely the remaining $n-m$ components.
\end{itemize}

Let $\D$ be the collection of all dyadic cubes in $\R^d$. Let $\phi$ be a smooth function whose Fourier transform is adapted\footnote{That means supported in $[-1/2,1/2]^d$ and with the first few derivatives bounded by one} to the cube $[-1/2,1/2]^d$. For each $R\in\D$ with center $(c_1(R),\ldots,c_d(R))$ and each $\bar{\omega_i}\in  \bar{\O}_i$ such that $|R||\bar{\omega_i}|=1$ we define the $L^2$ normalized wave-packet
$$\phi_{R\times\bar{\omega_i}}(x_1,\ldots,x_d)=\frac{1}{|R|^{1/2}}\phi\left(\frac{x_1-c_1(R)}{l(R)},\ldots,\frac{x_d-c_d(R)}{l(R)}\right)e^{i(c_1(\bar{\omega_i})x_1+\ldots+c_d(\bar{\omega_i})x_d)}.$$

By using a standard discretization procedure like in \cite{MTT1}, involving a decomposition for the multiplier $M$ adapted to the collection $\bar{\O}$, and then a Gabor basis decomposition for each $F_i$, Theorem \ref{ttt1} will follow from the discretized version below:

\begin{theorem}
\label{redu1}
Let $F_i\in L^{p_i}(\R^d)$ with $2<p_i\le \infty$ as in \eqref{homoghjgxjhg}. Let $\bar{\O}$ be any finite collection satisfying (i)-(iv). Then for generic $\Gamma'$
$$\sum_{R\in\D,\;\bar{\omega}\in  \bar{\O}\atop{|R||\bar{\omega_1}|=1}}|R|^{1-\frac{n}{2}}\prod_{i=1}^{n}|\langle F_i,\phi_{R\times\bar{\omega_i}}\rangle|\lesssim \prod_{i=1}^{n}\|F_i\|_{p_i}.$$
Moreover, the implicit constant in the above inequality is independent of $F_i$ and of the collection $\bar{\O}$.
\end{theorem}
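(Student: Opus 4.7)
The plan is to follow the time--frequency framework of Lacey--Thiele and MTT1, adapted to the higher-dimensional, fractional-rank setting. I would first identify each term of the model sum with a \emph{multi-tile} $\vec{p}=(R,\bar{\omega_1},\ldots,\bar{\omega_n})$ with $R\in\D$ and $|R||\bar{\omega_i}|=1$. Property (iv) of $\bar{\O}$ ensures that a multi-tile is determined by $R$ together with any $m$ of its frequency components. These multi-tiles are then organized into \emph{trees}: given a distinguished index $i_T\in\{1,\ldots,n\}$ and a top $\vec{p}_T$, the tree $T$ collects all multi-tiles $\vec{p}$ with $R\subseteq R_T$ whose $i_T$-th frequency component sits above that of $\vec{p}_T$. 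On such a tree the wave packets $\phi_{R\times\bar{\omega_{i_T}}}$ are lacunary in the $i_T$-th variable, while in the other $n-1$ variables they are overlapping modulations centered on $\Gamma'$.

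Next I would introduce a \emph{size} $\size_i(\P)$ and an \emph{energy} $\operatorname{energy}_i(\P)$ for each $i$, mirroring the roles of $\BMO$ and $L^2$ respectively. A standard single-tree estimate, using $\ell^1$ summation of the coefficients in the lacunary direction of the tree together with Bessel-type $\ell^2$ control in the remaining directions, should yield
$$\sum_{\vec{p}\in T}|R|^{1-n/2}\prod_{i=1}^{n}|\langle F_i,\phi_{R\times\bar{\omega_i}}\rangle|\lesssim |R_T|\prod_{i=1}^{n}\size_i(T).$$
A greedy tree-selection algorithm then partitions $\P$ into trees whose sizes decay geometrically and whose cumulative $\sum_T|R_T|$ is controlled by $\operatorname{energy}_i^2/\size_i^2$. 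Summing the tree estimates bounds $|\Lambda(F_1,\ldots,F_n)|$ by $\prod_i \size_i^{1-2\theta_i}\operatorname{energy}_i^{2\theta_i}$ for any convex parameters $\theta_i\ge 0$ with $\sum_i\theta_i=1/2$; combined with the Bessel/Carleson-type bounds $\size_i\lesssim\|F_i\|_\infty$ and $\operatorname{energy}_i\lesssim\|F_i\|_2$, interpolation along \eqref{homoghjgxjhg} delivers the desired $L^{p_i}$ estimates in the range $p_i>2$.

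The main obstacle is the energy bound when $m\ge n/2$. In MTT1, where $k\in d\Z$, the control of tree tops proceeds by induction on $k$, each step reducing to a lower-dimensional instance of the same problem. That induction is unavailable in the fractional-rank case: once the lacunary direction is fixed, strictly more than $n/2$ free frequency coordinates remain, and the tree tops naturally organize into a \emph{vector tree}, i.e.\ a product of trees across several free directions. The required energy estimate reduces to a uniform bound on the counting function of such vector trees, which is a purely combinatorial statement. I expect the non-degeneracy condition \eqref{solvablesystemchangedputinfront} to be precisely the input that prevents tree tops from stacking too densely and thereby rules out logarithmic losses in this counting function. Producing this counting estimate is the main difficulty and is the content of Section \ref{sec:counting}, where the delicate combinatorics advertised in the introduction takes place.
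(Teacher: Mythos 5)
Your framework (multi-tiles, trees, a single-tree estimate, a stopping-time decomposition, and interpolation in the locally $L^2$ range) is the right skeleton, but the proposal stops exactly where the actual proof of Theorem \ref{redu1} begins. The statement you defer -- the counting estimate for vector trees -- is not a side lemma whose existence can be taken on faith; it is the entire new content needed when $m\ge n/2$, and nothing in your size/energy scheme produces it. Concretely: the naive use of the rank-$m$ property (iv) only shows that a vector tree is determined by $m$ of its standard projections, which yields a bound on the vector-tree counting function by a product of individual counting functions with exponents summing to $m/n\ge 1/2$; with such exponents the geometric sums over the size parameters in your ``$\prod_i\size_i^{1-2\theta_i}\operatorname{energy}_i^{2\theta_i}$'' bookkeeping (or in any weak-type variant) do not close, since \eqref{homoghjgxjhg} forces $\sum_i\theta_i=1/2$ exactly. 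What is needed, and what you have not supplied, is the strictly better pointwise bound $N_{\F}(\vec{x})\lesssim(\prod_i N_{\F_i}(\vec{x}))^{\delta}$ with $\delta<1/2$, as in \eqref{estforcountfunction1fgfdgfh} and \eqref{chvhcggyeuiwqyduy3e8394945056=-067asxwqr3466uuk}. Obtaining it requires genuinely new ingredients: a Cauchy--Schwarz-type combinatorial lemma (Lemma \ref{lemakatztao}, and its $d$-fold version) applied to pairs (or $d$-tuples) of vector trees sharing a standard projection, an injective map $H$ built from the remaining projections, and the additional rank property (r7), which is where the generic unique solvability of the system \eqref{solvablesystemchangedputinfront} actually enters -- it controls frequency distances for \emph{pairs} of pairs of multi-tiles, not merely the degeneracy of single tree tops. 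Your heuristic that the non-degeneracy ``prevents tree tops from stacking too densely and rules out logarithmic losses'' misidentifies the issue: logarithmic losses are harmless (the paper absorbs one into an $\epsilon$); the issue is lowering the exponent below $1/2$.

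Two further points where the proposal would need repair even granting the counting estimate. First, the injectivity argument cannot be run on an arbitrary tree decomposition: the paper must first reshuffle each $\P_i$ using Fefferman's trick so that tiles in distinct trees are incomparable, and then treat separately the vector trees consisting of a single multi-tile (a second stage, with overlapping trees, the order $\le$ in place of $\lesssim$, and the key incomparability property \eqref{keyproprthy}); without this two-stage structure the map $H$ is not even well defined, let alone injective. Second, the paper does not use an energy quantity at all: the selection Lemma \ref{l:Bes1} controls the counting functions $N_{\F_i}$ in BMO and $L^q$ (via John--Nirenberg and the localization \eqref{ghxcweytdutdleu gnd befvuy45ufyigsdghgfuetyferym,.m,.m}), and the conclusion is reached through an exceptional-set, weak-type argument summing $\|\Lambda_{\F}\|_t$ over the size, multiplicity and scale parameters. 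Your energy-based interpolation might be made to work in the locally $L^2$ range, but as written it hides precisely the quantities ($L^q$ bounds on counting functions of tree tops, uniformly over the stopping data) that the final summation needs.
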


We will fix $\bar{\O}$ for the remaining part of the paper. The following rank properties will be consequences of (i)-(iv) above (see Section 6 in \cite{MTT1} for details) and of the requirement that $\Gamma'$ is the graph over any $k$ of the canonical variables:
\begin{itemize}
\item (v) (overlapping) If for some $\bar{\omega},\bar{\omega}'\in\bar{\O}$ and for some $A\subset\{1,2,\ldots,n\}$ with $|A|=m$ we have
$\bar{\omega_i}\subset 3\bar{\omega_i}'$ for each $i\in A$, then $\bar{\omega_j}\subset C_1\bar{\omega_j}'$ for each $1\le j\le n$
\item (vi) (two lacunary indices) If for some $\bar{\omega},\bar{\omega}'\in\bar{\O}$ with the additional property that $\diam(\bar{\omega})<\diam(\bar{\omega}')$ and for some $A\subset\{1,2,\ldots,n\}$ with $|A|=m$ we have
$\bar{\omega_i}\subseteq 3\bar{\omega_i}'$ for each $i\in A$, then there exist $1\le i_1\not=i_2\le n$ such that $\bar{\omega_j}\nsubseteq 3\bar{\omega_j}'$ for each $j\in\{i_1,i_2\}$.
\end{itemize}

We recall the following definition from \cite{GL1}.
\begin{definition}
A collection $\G$ of intervals in $\R^d$ is called a central grid if
\begin{itemize}
\item ($\G1$) $R,R'\in\G$ and $R\cap R'\not=\emptyset$ implies $R\subseteq R'$ or $R'\subseteq R$
\item ($\G2$) $R,R'\in\G$ and $R\subseteq R'$ implies $C_2R\subseteq R'$
\item ($\G3$) $R,R'\in\G$ and $2|R|<|R'|$ implies $C_2|R|<|R'|$
\item ($\G4$) $R,R'\in\G$ and $|R|<|R'|\le 2|R|$ implies $\dist(R,R')\ge C_2|R'|$
\end{itemize}
\end{definition}

It turns out that each sufficiently sparse collection of cubes can be turned into a central grid. See the considerations following Definition 1 in \cite{GL1} for details.

\begin{lemma}[Centralization]
\label{L:centralization}
Let $\G_0$ be a collection of $d-$dimensional cubes satisfying the following properties
\begin{itemize}
\item (1) $\bar{R},\bar{R'}\in\G_0$ and $|\bar{R}|<|\bar{R'}|$ implies $|\bar{R|}<C_3|\bar{R'}|$
\item (2) $\bar{R},\bar{R'}\in\G_0$ and $|\bar{R}|=|\bar{R'}|$ implies $\dist(\bar{R},\bar{R'})\ge C_3|\bar{R}|$.
\end{itemize}
Then for each $\bar{R}\in \G_0$, there is a $d-$dimensional interval (not necessarily a cube) $R$ such that $\bar{R}\subset R\subset 2\bar{R}$ and such that the collection $\G:=\{R:\bar{R}\in\G_0\}$ is a central grid.
\end{lemma}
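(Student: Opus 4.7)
The plan is to construct each enlarged interval $R\supset \bar R$ by choosing, for each of the $2d$ coordinate faces of $\bar R$, an extension of length between $0$ and $s/2$, where $s:=|\bar R|^{1/d}$. Any such choice yields $\bar R\subset R\subset 2\bar R$ and gives $2d$ scalar parameters per cube with which to enforce the central grid conditions. The main leverage is the hierarchy $1\ll C_2\ll C_3$: by (1) and (2), cubes at distinct scales differ in side length by a factor $\ge C_3^{1/d}$, and same-scale cubes are at distance $\gtrsim C_3\, s$ from one another. Both gaps vastly exceed the enlargement factor $2$ and the central grid parameter $C_2$.

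I would process the cubes of $\G_0$ in decreasing order of scale, building $R$ for each $\bar R$ with reference to the already-chosen larger intervals. Properties ($\G3$) and ($\G4$) across distinct scales then become automatic from (1): the side-length ratio $C_3^{1/d}$ survives the factor-$2$ enlargement and still exceeds $C_2$. At equal scales, (2) gives $\dist(\bar R,\bar R')\ge C_3\, s$, and enlarging each by at most $s/2$ per face leaves $\dist(R,R')\ge (C_3-1)s\ge C_2 |R|^{1/d}$, yielding ($\G4$). Same-scale ($\G1$) then follows since ($\G4$) forces disjointness, while same-scale ($\G2$)--($\G3$) are vacuous.

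The crux is ($\G1$)--($\G2$) across distinct scales, which require that whenever $R\subset R'$ one in fact has $C_2 R\subset R'$, and otherwise $R\cap R'=\emptyset$. The potential obstruction is a smaller $\bar R$ straddling a face of the previously chosen $R'$. I would resolve this at the moment of constructing $R'$: for each of the $2d$ faces of $\bar R'$, I would inspect the \emph{ambiguous} smaller cubes, i.e., those whose relation to $R'$ has not already been forced (inside with $C_2$ slack, or outside with $C_2 |R'|^{1/d}$ separation). By (2) at the smaller scale, any two ambiguous cubes on the same face are $\ge C_3\, s$ apart, while the face of $R'$ can be slid by up to $s':=|\bar R'|^{1/d}\ge C_3^{1/d}s$; a greedy placement of each face between the ``absorb'' and ``reject'' regions therefore resolves all ambiguities independently per face.

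The main obstacle will be the combinatorial bookkeeping across all faces, all scales, and all cube pairs simultaneously, verifying that the greedy face-by-face choices are both locally feasible and globally consistent. This ultimately reduces to a handful of inequalities involving $2$, $C_2$, and $C_3$, each satisfied because $C_3\gg C_2$. The overall construction follows the template described after Definition~1 in \cite{GL1}.
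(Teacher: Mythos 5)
The paper never proves this lemma: it refers to the discussion following Definition 1 in \cite{GL1}, which treats the one--dimensional situation, so your proposal has to be judged on its own. Your framework is the natural one (extend each face by an amount in $[0,s/2]$, process scales from coarse to fine, and when building a large interval $R'$ classify every smaller cube $\bar R$ as either ``absorbed'', meaning $2C_2\bar R$ safely inside the relevant face, or ``rejected'', meaning $2\bar R$ safely outside, so that any later choice of $R\subset 2\bar R$ is automatically compatible). The genuine gap is the sentence ``by (2) at the smaller scale, any two ambiguous cubes on the same face are $\ge C_3\,s$ apart \ldots a greedy placement of each face \ldots resolves all ambiguities''. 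Hypothesis (2) separates same--scale cubes in $\R^d$, not their projections onto the direction normal to the face, and the face can only be slid in that normal direction. For $d\ge 2$ one can place many cubes of side $s\ll s'$ pairwise $C_3 s$--separated (so (1)--(2) hold) but staggered so that their normal projections advance by only $O(C_2 s)$ at a time; each such cube forbids a band of width $\sim C_2 s$ for the face, and once $s/s'$ is small enough these bands cover the entire sliding range $[0,s'/2]$, so no admissible position of the face works. Thus the greedy step does not follow from (1)--(2) alone; to run it one must use extra structure of the specific collections to which the lemma is applied (for instance that same--scale cubes are aligned with a lattice adapted to their scale, so that each coordinate projection of two such cubes is either equal or $\gtrsim C_3 s$--separated), and no such input appears in your argument.

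There is a second, independent issue even in the one--dimensional case, where (2) does give separation along the sliding direction: resolving ambiguities ``independently per face'' and per scale is not enough, because the face of $R'$ must avoid the forbidden zones of \emph{all} smaller scales simultaneously, and a union bound over scales fails as soon as the number of scales exceeds roughly $C_3/C_2$. The argument behind \cite{GL1} is a nested refinement: choose the face position inside a gap left by the coarsest of the smaller scales, then refine that choice inside the gap at the next scale, and so on, using $C_2\ll C_3$ to ensure that a gap at one scale is much longer than the total width of the forbidden zones of the next scale that it contains. Your write--up acknowledges ``bookkeeping'' but does not contain this mechanism. Finally, a small point: with $|\cdot|$ denoting volume, the same--scale case of ($\G3$) is not vacuous for $d\ge 2$, since independent face extensions can change volumes by any factor between $1$ and $2^d>2$; you need to coordinate the enlargements within each scale (e.g.\ fix the total expansion and only distribute it among the faces) for ($\G3$) to survive.
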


Note that both the collection $\bar{\O}_i$ and the collection $2C_1\bar{\O}_i:=\{2C_1\bar{\omega_i}:\bar{\omega_i}\in \bar{\O}_i\}$ satisfy the requirements in Lemma \ref{L:centralization} (if for example $C_3<<C_4-2C_1$). By applying  Lemma \ref{L:centralization} to each of these collections, we associate to each $\bar{\omega_i}\in \bar{\O}_i$ two intervals $\omega_i$ and $\tilde{\omega_i}$ such that $\bar{\omega_i}\subseteq \omega_i\subseteq 2\bar{\omega_i}$, $2C_1\bar{\omega_i}\subseteq \tilde{\omega_i}\subseteq 4C_1\bar{\omega_i}$, and such that the collection
$$\O_i:=\{\omega_i:\bar{\omega_i}\in \bar{\O}_i\}$$
is a grid, while the collection
$$\tilde{\O_i}:=\{\tilde{\omega_i}:\bar{\omega_i}\in \bar{\O}_i\}$$
 is a central grid. Note that we will need a stronger assumption on the second collection. For each $\bar{\omega}=\bar{\omega_1}\times\ldots\times\bar{\omega_n}$, we will use the notation ${\omega}={\omega_1}\times\ldots\times{\omega_n}$ and $\tilde{\omega}=\tilde{\omega_1}\times\ldots\times\tilde{\omega_n}$, and these two new intervals will form the collections $\O$ and $\tilde{\O}$, respectively.

From now on we will abandon the collection $\bar{\O}$ and only refer to the collections $\O$ and $\tilde{\O}$.

Note that the sidelegths of each interval ${\omega}\in\O$ are within a factor of two from each other. Denote by $\Y({\omega_i})$ the collection of all dyadic cubes $R$ in $\R^d$ such that $|R||\bar{\omega_i}|=1$. Since $\bar{\omega}$ is a cube, the collections $\Y({\omega_i})$ are all the same for $1\le i\le n$. For each $1\le i\le n$, let $\S_i$ denote the collection of all  $s_i:=R_{s_i}\times \omega_{s_i}$, with $\omega_i$ ranging through $\O_i$ and $R_{s_i}$ ranging through $\Y({\omega_i})$.
The collection $\S$ will consist of all $s=R_s\times \omega_{s}$, with $\omega_{s}:=\omega_{s_1}\times\ldots\omega_{s_n}$ ranging through $\O$ and $R_s$ ranging through (any of the) $\Y({\omega_{s_i}})$.

An element $s=R_s\times \omega_{s_1}\times\ldots\omega_{s_n}\in \S$ will be referred to as a multi-tile, while its components $s_i:=R_s\times\omega_{s_i}\in \S_i$ will be referred to as tiles. The intervals $R_s$ and $\omega_s$ will be referred to as the spatial and frequency components of $s$.

We introduce some relations of order, which are very similar to the ones in Definition 6.1. in \cite{MTT1}.

\begin{definition}
Let $s_i,s_i'\in\S_i$ be two tiles. We write
\begin{itemize}
\item $s_i\le s_i'$ if $R_{s_i}\subseteq R_{s_i'}$ and $\omega_{s_i'}\subseteq \omega_{s_i}$
\item $s_i\lesssim s_i'$ if $R_{s_i}\subseteq R_{s_i'}$ and $\tilde{\omega
}_{s_i'}\subseteq \tilde{\omega}_{s_i}$
\item $s_i\lesssim' s_i'$ if $s_i\lesssim s_i'$ but $s_i\nleq s_i'$
\end{itemize}
\end{definition}

We note that as a consequence of the rank properties (iv)-(vi) and the grid structure of $\O$ and $\tilde{\O}$ we get the following
\begin{itemize}
\item (r1) (rank $m$) Any $m$ of the $n$ frequency components of some  $s\in\S$ determine uniquely the remaining $n-m$ components.
\item (r2) (overlapping) If for some $s,s'\in\S$ and for some $A\subset\{1,2,\ldots,n\}$ with $|A|=m$ we have
$s_i\le s_i'$ for each $i\in A$, then $s_j\lesssim s_j'$ for each $1\le j\le n$
\item (r3) (two lacunary indices)
If for some $s,s'\in\S$ with $|I_{s'}|<|I_s|$ and for some $A\subset\{1,2,\ldots,n\}$ with $|A|=m$ we have
$s_i\le s_i'$ for each $i\in A$, then there exist $1\le i_1\not=i_2\le n$ such that $s_{j}\lesssim's_j'$ for each $j\in\{i_1,i_2\}$.
\end{itemize}

Properties (r2) and (r3) are all about the frequency components of multi-tiles, the spatial components do not play any role.

We also record for future reference the grid properties satisfied by the multi-tiles:
\begin{itemize}
\item (r4) The collection $\{\omega_{s_i}:s_i\in\S_i\}$ is a grid\footnote{While we can arrange that this collection is a {\em central} grid, too, we will not need this strong assumption}
\item (r5) The collection $\{\tilde{\omega}_{s_i}:s_i\in\S_i\}$ is a central grid
\end{itemize}
It is also clear (due to (iii)) that if $C_0$ is sufficiently large, then
\begin{itemize}
\item (r6) For each $s\in \S$ we have $C_0^2{\omega}_s\cap \Gamma'\not=\emptyset$
\end{itemize}
For each tile $s_i=R_{s_i}\times \omega_{s_i}$, we will have a wave packet associated with it, namely
$$\phi_{s_i}:=\phi_{R_{s_i}\times \bar{\omega}_{s_i}},$$
where $\bar{\omega}_{s_i}$ is the cube in $\bar{\O_i}$ that generates $\omega_{s_i}$ via the procedure described earlier. Note that the Fourier transform of $\phi_{s_i}$ is supported in $\omega_{s_i}$, while spatially, $\phi_{s_i}$ is a bump function quasi-localized near the cube $R_{s_i}$.

With this notation, Theorem \ref{redu1} can be rephrased as follows:

\begin{theorem}
\label{redu2}
Let $F_i\in L^{p_i}(\R^d)$ with $2<p_i\le \infty$ as in \eqref{homoghjgxjhg}. Let $\S$ be a collection of multi-tiles satisfying (r1)-(r6). Then for generic $\Gamma'$
$$\sum_{s\in\S}|R_s|^{1-\frac{n}{2}}\prod_{i=1}^{n}|\langle F_i,\phi_{s_i}\rangle|\lesssim \prod_{i=1}^{n}\|F_i\|_{p_i}.$$
Moreover, the implicit constant in the above inequality is independent of $F_i$ and of the collection $\S$.
\end{theorem}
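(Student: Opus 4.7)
The plan is to follow the standard time-frequency discretization scheme of Muscalu--Tao--Thiele \cite{MTT1}, replacing their induction on $k$ by a new counting estimate for what I would call \emph{vector trees}. First I would normalize: by homogeneity we may assume $\|F_i\|_{p_i}=1$, and by multi-linear restricted weak-type interpolation (which the range $p_i>2$ permits) it suffices to show that for arbitrary measurable sets $E_1,\ldots,E_n\subset\R^d$ there is a major subset $E_n'\subset E_n$ with $|E_n'|\ge|E_n|/2$ such that
$$\sum_{s\in\S}|R_s|^{1-n/2}\prod_{i=1}^{n-1}|\langle\one_{E_i},\phi_{s_i}\rangle|\cdot|\langle\one_{E_n'},\phi_{s_n}\rangle|\lesssim \prod_{i=1}^{n}|E_i|^{1/p_i},$$
where $E_n\setminus E_n'$ is a standard maximal- and square-function exceptional set constructed from the sets $E_i$.

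Next I would introduce the combinatorial machinery. For each index $i$, an $i$-tree is a subcollection $T\subset\S$ with a top multi-tile $s_T$ such that $s_i\lesssim(s_T)_i$ for every $s\in T$; it is called lacunary if in addition $s_i\lesssim'(s_T)_i$. The $i$-size of $\S'\subset\S$ is the supremum of
$$\left(\frac{1}{|R_{s_T}|}\sum_{s\in T}|\langle\one_{E_i},\phi_{s_i}\rangle|^2\right)^{1/2}$$
over lacunary $i$-trees $T\subset\S'$. A single-tree estimate (one Cauchy--Schwarz per index together with a BMO-type $L^2$ bound on a single tree) then yields
$$\sum_{s\in T}|R_s|^{1-n/2}\prod_{i=1}^{n}|\langle\one_{E_i},\phi_{s_i}\rangle|\lesssim |R_{s_T}|\prod_{i=1}^{n}\size_i(T).$$
A selection-of-trees algorithm splits $\S$ into level sets $\S^{\vec{\delta}}$ on which each $\size_i$ is comparable to $2^{-\delta_i}\lesssim 1$, reducing the estimate to a counting bound for the tops $\sum_{T\in \mathcal{T}^{\vec{\delta}}}|R_{s_T}|$. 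Interpolating this counting bound against the single-tree estimate and summing dyadically over $\vec{\delta}$ produces the exponents $1/p_i$ and closes the argument.

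The bulk of the work, and the main obstacle, is the counting bound itself. When $m<n/2$ there are strictly more than $n/2$ lacunary directions per multi-tile incidence, and a direct Bessel-type inequality (coupled with the \cite{MTT1} induction on $k$) suffices. In the fractional regime $m\ge n/2$ this route is closed: one cannot peel off a single coordinate to reduce to strictly lower rank, so all directions must be controlled simultaneously. Instead I would organize the selected trees from different directions into \emph{vector trees} -- Cartesian-type products of single-direction trees sitting over a common spatial base -- and bound their collective counting function using the rank properties (r1)--(r3) in concert with the unique solvability of the linear system \eqref{solvablesystemchangedputinfront}. Property (r3) supplies two lacunary indices per multi-tile overlap, which is precisely the orthogonality needed to beat the naive $\ell^1$ count; converting this local input into a clean global counting estimate forces a delicate multi-parameter combinatorial analysis, and is the content of Section \ref{sec:counting}.
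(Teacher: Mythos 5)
Your outer framework is the same as the paper's (tree/size decomposition, the single-tree estimate via two lacunary indices, tree selection, then a counting bound for tops summed over dyadic size levels), but there is a genuine gap exactly where the new content of the theorem lies: the counting bound for vector trees in the regime $m\ge n/2$ is only named, not proved. Saying that the rank properties and the solvability of \eqref{solvablesystemchangedputinfront} "force a delicate multi-parameter combinatorial analysis" defers the one step that cannot be borrowed from \cite{MTT1}. What is actually needed is a \emph{pointwise} estimate of the form $N_{\F}(\vec{x})\lesssim\bigl(\prod_{i=1}^{n}N_{\F_i}(\vec{x})\bigr)^{\delta}$ with $\delta$ strictly less than $1/2$ (see \eqref{estforcountfunction1fgfdgfh} and \eqref{chvhcggyeuiwqyduy3e8394945056=-067asxwqr3466uuk}); in the final summation the exponent that appears is $2\delta+\frac{p_i}{nt}-1$, so any bound with exponent $1/2$ per direction (which is all that the naive rank-$m$ count gives when $m\ge n/2$) fails to sum. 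Your proposal never explains how to get below $1/2$.

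Concretely, the mechanism you are missing has three ingredients. First, the Katz--Tao double-counting lemma (Lemma \ref{lemakatztao}, and Lemma \ref{./x,.ewuf490389} for $d>2$) applied to pairs (or $d$-tuples) of vector trees sharing a common standard projection. Second, the injectivity of the map $H$ in \eqref{defH}, which sends such a pair to the standard projections onto the complementary indices; this is not a consequence of (r1)--(r3) alone but of the extra rank property (r7), which is exactly where the generic unique solvability of \eqref{solvablesystemchangedputinfront} enters, together with Proposition \ref{knowmknowall}. Third, a two-stage reshuffling of the tiles (Fefferman's trick first with respect to $\lesssim$, producing the families $\F^{**}(\vec{l})$, and then a second pass with respect to $\le$ on the leftover single-multi-tile vector trees) is needed to make the standard projections well defined and to make the injectivity argument run in both cases. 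Also, your attribution of the gain to (r3) ("two lacunary indices per multi-tile overlap") conflates two different steps: the two lacunary indices feed the single-tree estimate (Lemma \ref{vecttreeest}), not the counting bound; the counting bound gains nothing from lacunarity and everything from (r7) plus the Katz--Tao lemma. Without these ingredients your reduction to "a counting bound for the tops" is exactly the point at which the \cite{MTT1} argument breaks for fractional rank, so the proof as proposed does not close.
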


The genericity of $\Gamma'$ will imply an additional rank property (r7), that we choose not to state explicitly at this point, but which will become rather clear later (for example, see the beginning of Section \ref{The case $d=2$}).

\section{Trees}
In order to prove Theorem \ref{redu2} we will organize each collection $\S_i$ into smaller structures, called trees.

\begin{definition}
Let $R_{\T}$ be a dyadic cube in $\R^d$ and let $\xi_{\T}\in\R^d$. A collection of tiles $\T\subset \S_i$ is called an $i-$tree (also sometimes referred to as tree, when the index $i$ is either not important or when it is clear from the context) with top $(R_{\T},\xi_{\T})$, if $R_{s_i}\subseteq R_\T$ and  $\xi_{\T}\in \tilde{\omega}_{s_i}$ for each  $s_i\in\T$.

In case $\xi_{\T}\in \omega_{s_i}$ for each $s_i\in\T$ the tree $\T$ is called $i-$overlapping. If $\xi_{\T}\in \tilde{\omega}_{s_i}\setminus\omega_{s_i}$ for each $s_i\in\T$, the tree  will be called $i-$lacunary.
\end{definition}

We note that a tree consisting of a single tile is both lacunary and overlapping (these are actually the only examples of such trees). In general, a tree must not necessarily be overlapping or lacunary. However, each tree can be split as the disjoint union of an overlapping tree and a lacunary tree.

Trees will be used to construct similar structures consisting of multi-tiles, called vector trees.

\begin{definition}
Let $R_{\vec{\T}}$ be a dyadic cube in $\R^d$ and let $\xi_{\vec{\T}}=(\xi_{\vec{\T},1},\dots,\xi_{\vec{\T},n})\in\R^{dn}$. A collection of multi-tiles $\vec{\T}\subset \S$ is called a vector tree with top  $(R_{\vec{\T}},\xi_{\vec{\T}})$ if for each $1\le i\le n$, the projection $\T_i:=\{s_i:s\in\vec{\T}\}$ is an $i-$tree with top $(R_{\vec{\T}},\xi_{\vec{\T},i})$.

The vector tree $\vec{\T}$ is called $i-$overlapping, if its projection $\T_i$ is an $i-$overlapping tree. If this is the case, we call the index $i$ overlapping. Similarly, the vector tree $\vec{\T}$ is called $i-$lacunary, if $\T_i$ is an $i-$lacunary  tree. If this is the case, we call the index $i$ lacunary.
\end{definition}

\begin{remark}
Note that the rank property (r3) implies that each vector tree has at least two lacunary indices.
\end{remark}

\begin{definition}
Let $\P\subseteq\S_i$ be a collection of tiles. Its $i-$size is defined as
$$\size_i(\P):=\sup_{\T\subseteq\P}\left(\frac{1}{|R_{\T}|}\sum_{s_i\in\T}|\langle F_i,\phi_{s_i}\rangle|^2\right)^{1/2},$$
where the supremum is taken over all lacunary $i-$trees $\T\subseteq\P$ with tops $(R_{\T},\xi_{\T})$.
\end{definition}

The definition of the size of a collection $\P$ depends on the choice function $F_i$. We choose not to index the size by $F_i$, since $F_i$ will later be fixed.

The next Lemma shows that the size is dominated by the Hardy-Littlewood maximal function (see again \cite{MTT1} for details)
$$M_2F(\vec{x}):=\sup_{\vec{x}\in R\atop{R\;cube\;in\;\R^d}}\left(\frac{1}{|R|}\int_{R}|F|^2(\vec{y})d\vec{y}\right)^{1/2}.$$

\begin{lemma}
\label{sizeestbymaxf}
Let $\T$ be a lacunary $i-$tree with top $(R_{\T},\xi_{{\T}})$. Then
$$\left(\frac{1}{|R_{\T}|}\sum_{s_i\in\T}|\langle F_i,\phi_{s_i}\rangle|^2\right)^{1/2}\lesssim \inf_{\vec{x}\in R_\T}M_2F_i(\vec{x}).$$
As a consequence,
$$\size_i(\P)\lesssim \sup_{s_i\in\P}\inf_{\vec{x}\in R_{s_i}}M_2F_i(\vec{x}).$$
\end{lemma}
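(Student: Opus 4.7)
The plan is to prove the first (displayed) inequality for a single lacunary $i$-tree $\T$ with top $(R_\T,\xi_\T)$; the bound on $\size_i(\P)$ then follows by monotonicity: for any lacunary tree $\T\subseteq\P$ and any $s_i\in\T$, one has $R_{s_i}\subseteq R_\T$, so $\inf_{R_\T}M_2F_i\le \inf_{R_{s_i}}M_2F_i\le \sup_{s_i\in\P}\inf_{R_{s_i}}M_2F_i$, and taking the supremum over $\T\subseteq\P$ on the left produces exactly $\size_i(\P)$.

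The first step is to modulate by $\xi_\T$. Setting $G(x):=F_i(x)e^{-i\xi_\T\cdot x}$ and $\psi_{s_i}(x):=\phi_{s_i}(x)e^{-i\xi_\T\cdot x}$ preserves each pairing, while placing $\widehat{\psi_{s_i}}$ on the translate $\omega_{s_i}-\xi_\T$. The lacunary hypothesis $\xi_\T\in\tilde\omega_{s_i}\setminus\omega_{s_i}$, combined with the central-grid property (r5) and ($\G4$), forces $\xi_\T$ to lie in at most one $\tilde\omega_{s_i}$ per dyadic scale, so the supports $\omega_{s_i}-\xi_\T$ occupy a Whitney-type arrangement around the origin, with a unique frequency cube $\omega^{(l)}$ at each scale $l$. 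Within a fixed scale, the wave packets sharing $\omega^{(l)}$ are space-translates of one $L^2$-normalized bump, supported on disjoint dyadic cubes $R_{s_i}\subseteq R_\T$ of sidelength $1/l(\omega^{(l)})$.

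The core analytic step is a Bessel-type inequality
$$\Bigl\|\sum_{s_i\in\T}c_{s_i}\psi_{s_i}\Bigr\|_2^2\lesssim\sum_{s_i\in\T}|c_{s_i}|^2,$$
which comes from spatial near-disjointness within a scale and Fourier near-disjointness across scales. Splitting $G=G\chi_{3R_\T}+G\chi_{(3R_\T)^c}$, the near contribution is controlled by duality:
$$\sum_{s_i\in\T}|\langle G\chi_{3R_\T},\psi_{s_i}\rangle|^2\lesssim\|G\chi_{3R_\T}\|_2^2\lesssim|R_\T|\,\bigl(\inf_{x\in R_\T}M_2F_i(x)\bigr)^2.$$
The far contribution is handled by the Schwartz decay of $\phi$ off $R_{s_i}$: each pairing is dominated by a rapidly decaying weighted average of $|F_i|$ on dilates of $R_{s_i}$, and geometric summation---first over $s_i$ at a fixed scale using the disjointness of the $R_{s_i}$, then over the dyadic scales, with the decay in the distance to $R_\T$ absorbing the tails---produces the same bound.

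The main obstacle is making the Bessel step rigorous. Although the underlying picture (disjoint $R_{s_i}$ at a scale, disjoint $\omega^{(l)}$ across scales) is clean, the frequency cubes $\omega_{s_i}$ are only approximately dyadic: they are perturbations of the Whitney cubes $\bar\omega_{s_i}$ by the centralization of Lemma \ref{L:centralization}, carrying the constants $C_1,C_2,C_3$. One must therefore check that $C_4$ (governing the separation in $\bar\O$) is chosen large enough so that the residual overlaps between $\psi_{s_i}$'s of different scales contribute only a Schur-summable, and hence absorbable, perturbation, and similarly for the spatial overlaps of the Schwartz tails within a scale. This is the step where the precise size of the constants $C_i$ matters and where the argument parallels the one in \cite{MTT1}.
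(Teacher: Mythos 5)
Your overall plan (modulate by $\xi_\T$, split $F_i$ into a part on a dilate of $R_\T$ and a far part, prove a Bessel inequality for the wave packets of the lacunary tree, and dominate by $M_2F_i$; then deduce the size bound by monotonicity of the infimum) is exactly the standard route, and indeed the paper does not reprove this lemma but defers to \cite{MTT1}, where this is the argument. The within-scale analysis is also right: by the separation property (ii) (and $C_1\ll C_4$) the tree has a unique frequency cube per scale, and tiles of one scale have disjoint spatial cubes.

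However, there is a genuine gap in your justification of the key Bessel step. The lacunary condition $\xi_\T\in\tilde{\omega}_{s_i}\setminus\omega_{s_i}$ gives an \emph{upper} bound $\dist(\xi_\T,\omega_{s_i})\lesssim C_1\diam(\omega_{s_i})$ but no lower bound, so the translated supports are \emph{not} in a Whitney-type arrangement around the origin, and "Fourier near-disjointness across scales" fails: nothing in (r4), (r5) or the axioms on $\bar{\O}$ prevents the frequency cubes of two tree tiles at widely separated scales from being nested, e.g. all cubes of the form $\omega^{(l)}=\xi_\T+[\eps,\eps+\delta_l]^d$ with one tiny $\eps>0$ for every scale $\delta_l$, which is lacunary for every choice of $C_1,C_2,C_4$. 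In that configuration the cross-scale overlaps are not a small perturbation removable by taking $C_4$ large: for a fixed tile with large spatial cube, the sum of $|\langle\phi_{s_i},\phi_{s_i'}\rangle|$ over the tree tiles of a finer spatial scale $\delta'^{-1}$ inside it is of order $(\delta'/\delta)^{d/2}$ if one only uses size and support bounds, so the Schur test you invoke diverges. What rescues the Bessel inequality is a different mechanism, which your sketch never uses: since $\hat{\phi}$ is smooth and supported in the closed cube, $\hat{\phi}_{s_i'}$ vanishes to high order at $\partial\bar{\omega}_{s_i'}$; lacunarity puts $\xi_\T$ outside $\bar{\omega}_{s_i'}$ while the smaller frequency cube $\bar{\omega}_{s_i}$ lies within $O(C_1)\diam(\bar{\omega}_{s_i})$ of $\xi_\T$, hence deep in the region where $\hat{\phi}_{s_i'}$ is tiny. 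This yields an extra factor $(C_1\delta/\delta')^N$ in each cross-scale inner product (equivalently, one can run a per-scale frequency-projection argument), after which the sums over scales converge and the Bessel constant is $O(1)$. Without this ingredient the core estimate of your proof is unproved, and the remedy you propose (bookkeeping of $C_1,C_2,C_3,C_4$) does not supply it.
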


We will use the following estimate for vector trees:
\begin{lemma}
\label{vecttreeest}
Let $\vec{\T}$ be a vector tree with top $(R_{\T},\xi_{\T})$. Then
$$\sum_{s\in\T}|R_s|^{1-\frac{n}{2}}\prod_{i=1}^{n}|\langle F_i,\phi_{s_i}\rangle|\le |R_{\T}|\prod_{i=1}^{n}\size_i(\T_i).$$
\end{lemma}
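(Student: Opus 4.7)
The plan is to exploit the two lacunary indices $i_0, i_1$ whose existence is guaranteed by the Remark following the definition of vector trees. For the remaining $n-2$ indices I will use a pointwise size bound, while for the two distinguished ones I will invoke Cauchy--Schwarz together with the $\ell^2$-based lacunary size estimate.

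The pointwise bound comes for free. For every tile $s_i\in\S_i$ the singleton $\{s_i\}$ is a lacunary $i$-tree (take any top frequency in $\tilde{\omega}_{s_i}\setminus\omega_{s_i}$, which is nonempty since $\omega_{s_i}\subsetneq\tilde{\omega}_{s_i}$), so the definition of $\size_i$ immediately gives $|\langle F_i,\phi_{s_i}\rangle|\le |R_s|^{1/2}\size_i(\T_i)$. Applying this bound for every $i\neq i_0,i_1$ and using that $|R_s|^{1-n/2}\cdot|R_s|^{(n-2)/2}=1$, the full sum is controlled by
$$\prod_{i\neq i_0,i_1}\size_i(\T_i)\cdot\sum_{s\in\vec{\T}}|\langle F_{i_0},\phi_{s_{i_0}}\rangle|\,|\langle F_{i_1},\phi_{s_{i_1}}\rangle|.$$

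For the residual sum I would apply Cauchy--Schwarz in $s$ and then pass from the multi-tile sum to the tile sum in each $\T_{i_j}$. The only real step to verify is that the projections $s\mapsto s_{i_j}$ are injective on $\vec{\T}$, so that there is no overcounting. This is the main technical point, but it follows quickly from the central grid property (r5): once $\xi_{\T}$ is fixed, at each scale there is at most one interval of $\tilde{\O}_j$ containing $\xi_{\T,j}$ (central grid intervals of comparable scale being disjoint), and the rank property (r1) together with $|R_s||\omega_{s_j}|=1$ then determines $\omega_s$ completely from its scale. Consequently each dyadic cube $R_s\subseteq R_{\T}$ can serve as the spatial part of at most one multi-tile in $\vec{\T}$, whence $s\mapsto R_s$, and \emph{a fortiori} $s\mapsto s_{i_j}$, is injective on $\vec{\T}$.

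With injectivity in hand, and since $\T_{i_0}$ and $\T_{i_1}$ are lacunary trees by hypothesis, the definition of size yields
$$\sum_{s\in\vec{\T}}|\langle F_{i_j},\phi_{s_{i_j}}\rangle|^2=\sum_{s_{i_j}\in\T_{i_j}}|\langle F_{i_j},\phi_{s_{i_j}}\rangle|^2\le |R_{\T}|\,\size_{i_j}(\T_{i_j})^2\qquad(j=0,1),$$
so Cauchy--Schwarz produces a factor $|R_{\T}|\,\size_{i_0}(\T_{i_0})\size_{i_1}(\T_{i_1})$. Multiplying by the $\size_i(\T_i)$ for $i\neq i_0,i_1$ gives precisely the claimed inequality. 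I do not expect any further obstacles beyond the injectivity observation, which is essentially a bookkeeping consequence of the central grid structure.
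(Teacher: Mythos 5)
Your argument is correct and is essentially the paper's own proof, which is exactly H\"older/Cauchy--Schwarz with an $\ell^2$ size estimate on two lacunary indices (guaranteed by the Remark via (r3)) and the trivial $\ell^\infty$ single-tile size bound on the remaining $n-2$ indices. The only difference is that you spell out the bookkeeping (singleton tiles as lacunary trees, injectivity of $s\mapsto s_{i_j}$ via the grid separation) that the paper leaves implicit.
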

\begin{proof}
Use H\"older's inequality with an $l^2$ estimate for two lacunary indices and with an $l^{\infty}$ estimate for the remaining $n-2$ indices.
\end{proof}

For a collection $\F$ of trees or vector trees, we will use the notation
$$N_{\F}(\vec{x}):=\sum_{\T\in\F}1_{R_\T}(\vec{x}).$$
Also, we will denote with $\|N_{\F}\|_{\BMO}$ the dyadic BMO norm of $N_{\F}$.

We next show how to split a collection $\P$ of tiles into collections of trees with controlled size. First, we show how to cut the size in half.

\begin{lemma}
\label{l:Bes1}
Let $\P\subseteq \S_i$ be a collection of tiles. There is a collection $\F$ of disjoint (as collections of tiles) $i-$trees in $\P$ such that
\begin{equation}
\label{ghxcweytdutdleu gnd befvuy45ufyigsdghgfuetyferym,.m,.m}
R_\T\subset \{\vec{x}:\inf_{\vec{x}\in R_{\T}}M_2F_i(\vec{x})\gtrsim \size_i(\P)\},\;\;\T\in\F
\end{equation}
\begin{equation}
\label{ghxcweytdutdleu gnd befvuy45ufyigsdghgfuetyfery}
\sum_{\T\in\F}|R_{\T}|\lesssim (\size_i(\P))^{-2}\|F_i\|_2^2
\end{equation}
\begin{equation}
\label{ghxcweytdutdleu gnd befvuy45ufyigsdghgfuetyferyghcgjhkla}
\|N_{\F}\|_{\BMO}\lesssim (\size_i(\P))^{-2}[\sup_{\T\in\F}\inf_{\vec{x}\in R_{\T}}M_2F_i(\vec{x})]^2
\end{equation}
\begin{equation}
\label{ghxcweytdutdleu gnd befvuy45ufyigsdghgfuetyferyghcgjhkladjhfjkdhsaAsa}
\|N_{\F}\|_{q}\lesssim (\size_i(\P))^{-2}[\sup_{\T\in\F}\inf_{\vec{x}\in R_{\T}}M_2F_i(\vec{x})]^2[\size_i(\P)^{p_i}\|F_i\|_{p_i}^{p_i}]^{1/q},\;1\le q<\infty
\end{equation}
and
$$\size_i(\P\setminus\bigcup_{\T\in\F}\T)<\frac{1}{2}\size_i(\P).$$
\end{lemma}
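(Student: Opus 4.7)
The plan is the classical selection algorithm from time-frequency analysis, essentially that of \cite{MTT1}. Initialize $\F:=\emptyset$ and $\P_0:=\P$. While there exists a lacunary $i$-tree $\T^*\subseteq \P_0$ with
$$\left(\frac{1}{|R_{\T^*}|}\sum_{s_i\in\T^*}|\langle F_i,\phi_{s_i}\rangle|^2\right)^{1/2}>\frac{1}{2}\size_i(\P),$$
I would select one with $R_{\T^*}$ maximal in some fixed order, form the enlarged tree
$$\T:=\{s_i\in\P_0:R_{s_i}\subseteq R_{\T^*}\text{ and }\xi_{\T^*}\in\tilde{\omega}_{s_i}\},$$
add $\T$ to $\F$ (retaining its distinguished lacunary sub-tree $\T^*$), remove $\T$ from $\P_0$, and iterate. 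When the process terminates, every lacunary tree in $\P_0$ has density at most $\frac12\size_i(\P)$, which is the last assertion. The trees in $\F$ are pairwise disjoint by construction, and \eqref{ghxcweytdutdleu gnd befvuy45ufyigsdghgfuetyferym,.m,.m} follows from Lemma \ref{sizeestbymaxf}: the density lower bound on $\T^*$ forces $\inf_{R_{\T}}M_2F_i\gtrsim \size_i(\P)$, placing $R_\T$ inside the stated level set.

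The heart of the argument is an almost-orthogonality (Bessel) bound for the selected tops:
$$\sum_{\T\in\F}\sum_{s_i\in\T^*}|\langle F_i,\phi_{s_i}\rangle|^2\lesssim \|F_i\|_2^2.$$
Combined with the density lower bound $\size_i(\P)^2|R_\T|\lesssim \sum_{s_i\in\T^*}|\langle F_i,\phi_{s_i}\rangle|^2$, this immediately yields \eqref{ghxcweytdutdleu gnd befvuy45ufyigsdghgfuetyfery}. To establish it I would use $TT^*$: the selection rule and the maximality of $R_{\T^*}$ ensure that for any two tiles $s_i\in\T^*$ and $s_i'\in\T'^*$ from distinct chosen trees, either their frequency regions $\tilde{\omega}_{s_i},\tilde{\omega}_{s_i'}$ are disjoint at any common scale (by the central-grid property (r5)), producing rapid decay from modulation when integrated against one another, or their spatial cubes are separated, producing spatial decay from the Schwartz bump profile. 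Summing the off-diagonal inner products over scales and tile pairs is then a standard almost-orthogonality computation, structurally identical to the one-dimensional case treated in \cite{MTT1}.

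For the $\BMO$ bound \eqref{ghxcweytdutdleu gnd befvuy45ufyigsdghgfuetyferyghcgjhkla} I would localize the Bessel step: for any dyadic cube $Q\in\D$ and $\F_Q:=\{\T\in\F:R_\T\subseteq Q\}$, the wave packets appearing in $\bigcup_{\T\in\F_Q}\T^*$ are spatially concentrated on $Q$ modulo rapid tails, so the same $TT^*$ computation gives
$$\sum_{\T\in\F_Q}\sum_{s_i\in\T^*}|\langle F_i,\phi_{s_i}\rangle|^2\lesssim \int_{\R^d}|F_i(\vec{x})|^2 w_Q(\vec{x})\,d\vec{x}$$
for a bump $w_Q$ adapted to $Q$. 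On each $R_\T\subseteq Q$ the selection rule places a point at which $M_2F_i\le A:=\sup_{\T'\in\F}\inf_{R_{\T'}}M_2F_i$, which controls the right-hand side by $A^2|Q|$ after absorbing tails through rapid decay. Dividing by $|Q|$ and using the density lower bound on the left yields $|Q|^{-1}\sum_{\F_Q}|R_\T|\lesssim \size_i(\P)^{-2}A^2$, proving \eqref{ghxcweytdutdleu gnd befvuy45ufyigsdghgfuetyferyghcgjhkla}. Finally, \eqref{ghxcweytdutdleu gnd befvuy45ufyigsdghgfuetyferyghcgjhkladjhfjkdhsaAsa} follows by John--Nirenberg interpolation between this $\BMO$ bound and the $L^1$ bound $\|N_\F\|_1\lesssim \size_i(\P)^{p_i-2}\|F_i\|_{p_i}^{p_i}$, the latter obtained from \eqref{ghxcweytdutdleu gnd befvuy45ufyigsdghgfuetyfery} combined with the Chebyshev estimate $\|F_i\,\mathbf{1}_{\{M_2F_i\gtrsim\size_i(\P)\}}\|_2^2\lesssim \size_i(\P)^{2-p_i}\|F_i\|_{p_i}^{p_i}$.

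The main obstacle is the Bessel step in the $d$-dimensional fractional-rank setting: one must carefully exploit both the grid structure (r4) and the central-grid structure (r5) of the frequency components to extract the required separation between selected trees, where in \cite{MTT1} the one-dimensional argument is more transparent. Once the almost-orthogonality inequality is in hand, the BMO localization and the $L^q$ interpolation are routine.
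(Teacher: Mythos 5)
There is a genuine gap at the heart of your argument: the Bessel step. You select trees by taking $R_{\T^*}$ ``maximal in some fixed order'' and then claim that this, together with the saturation step, forces any two tiles from distinct selected trees to be either frequency-disjoint or spatially separated. That is not true, and it is exactly the point the paper's proof is organized around. The paper splits each lacunary tree into at most $2^d$ subtrees which are $(+,l)$ or $(-,l)$ trees (meaning $(\xi_\T)_l$ lies on a fixed side of $c_l(\omega_{s_i})$ for all tiles of the tree), runs the selection in $2^d$ separate stages, and within each stage always picks a qualifying tree with \emph{extremal} (say minimal) value of $(\xi_\T)_l$. This frequency ordering is what produces strong disjointness: if $s_i\in\T$, $s_i'\in\T'$ and $\omega_{s_i}\subsetneq\omega_{s_i'}$, the $(+,l)$ structure forces $(\xi_{\T'})_l>(\xi_\T)_l$, hence $\T$ was selected \emph{before} $\T'$; then $\xi_\T\in\tilde\omega_{s_i'}$, so if $R_{s_i'}\subseteq R_\T$ the tile $s_i'$ would have been swallowed by $\T^{\mathrm{sat}}$ and could never appear in $\T'$, whence $R_{s_i'}\cap R_\T=\emptyset$. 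Under your rule nothing prevents the opposite order of selection: $\T'$ can be chosen first, claiming a small tile $s_i'$, while a large tile $s_i$ with $R_{s_i'}\subset R_{s_i}$ and $\omega_{s_i}\subsetneq\omega_{s_i'}$ survives the saturation of $\T'$ (for instance because $R_{s_i}\not\subseteq R_{\T'}$ or $\xi_{\T'}\notin\tilde\omega_{s_i}$) and later joins a tree $\T$ whose top contains $R_{s_i'}$. Such pairs overlap in frequency and are nested in space, they can occur in large numbers, and the off-diagonal terms in your $TT^*$ computation are then not summable; maximality of the spatial top and the grid properties (r4)--(r5) alone do not rescue this. So the almost-orthogonality inequality you rely on for the top-count bound (and, after localization, for the BMO bound) is unproven as stated, and the proposed selection algorithm must be replaced by the directionally ordered, $2^d$-stage selection (with the correspondingly adjusted pigeonhole threshold $\tfrac{1}{2\cdot 2^{d/2}}\size_i(\P)$, so that discarding all selected subtrees still leaves size $<\tfrac12\size_i(\P)$).

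Apart from this, the architecture of your write-up matches the paper: saturation trees alongside the selected lacunary trees, the density lower bound converting the Bessel inequality into the bound on $\sum_\T|R_\T|$, Lemma \ref{sizeestbymaxf} for the level-set containment, a localized Bessel argument for the BMO estimate, and John--Nirenberg (using $p_i>2$) for the $L^q$ bound; those reductions are fine once strong disjointness is actually available.
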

\begin{proof}
The proof is very standard, so we only say a few words about it (see for example Lemma 7.7. in \cite{MTT1} or Lemma 6.6. in \cite{MTT3} for details). The trees in $\F$ are selected in $2^d$ stages. We need first a definition. Let $1\le l\le d$. A lacunary $i-$tree with top $(R_{\T},\xi_{\T})$ is said to be a $(+,l)$ tree if
\begin{equation}
\label{ghxcweytdutdleu gnd befvuy45ufyi}
(\xi_{\T})_l>c_l(\omega_{s_i})
\end{equation}
for each $s_i\in\T$, where $c_l(\omega)$ denotes the $l^{th}$ component of the center $c(\omega)$ of $\omega$. Similarly, the tree is said to be a $(-,l)$ tree if the inequality is reversed in \eqref{ghxcweytdutdleu gnd befvuy45ufyi}. It is easy to see that each lacunary tree $\T$ is the disjoint union of at most $2^d$ lacunary subtrees having the same top as $\T$, each of which is either a $(+,l)$ tree or a $(-,l)$ tree for some $1\le l\le d$. By pigeonholing, it follows that if we eliminate from $\P$ all such trees having $\size_i$ at least $\frac{1}{2\times 2^{d/2}}\size_i(\P)$, the remaining collection of tiles will have the size $<\frac{1}{2}\size_i(\P)$.

 In a typical stage of the selection\footnote{The order of the stages does not really matter} one selects lacunary trees $\T$ with tops $(R_{\T},\xi_{\T})$ which are (say) $(+,l)$ trees, for a fixed $l$, and which satisfy
\begin{equation}
\label{jhfgfgtreeelim}
\frac{1}{|R_{\T}|}\sum_{s_i\in\T}|\langle F_i,\phi_{s_i}\rangle|^2\ge \frac{1}{4\times 2^{d}}(\size_i(\P))^2.
\end{equation}
 One always aims for the tree which has the minimal value for $(\xi_{\T})_l$ among all the trees that qualify to be selected at that moment. After such a tree  is selected, this tree is added to the collection $\F_1(+,l)$ and its tiles are eliminated from $\P$. Note that the remaining tiles which satisfy $\xi_{\T}\in\tilde{\omega}_{s_i}$ and $R_{s_i}\subseteq R_{\T}$ will form a tree $\T^{sat}$, having the same top as $\T$. Add $\T^{sat}$ to the collection $\F_2(+,l)$ and eliminate its tiles from $\P$. Then the cycle repeats, that is one starts searching again for a $(+,l)$ tree satisfying  \eqref{jhfgfgtreeelim}. When no such tree is available, the  $(+,l)$ stage of the selection process ends. One adds all the trees from  $\F_1(+,l)$ and from $\F_2(+,l)$  to the collection $\F$. At this point one goes to the next stage of the selection process.

The crucial observation is that in each stage, the trees from the collection $\F_1(+,l)$ have the following property\footnote{In the literature, this property is referred to as "strong disjointness"}: If $\T,\T'\in \F_1(+,l)$, $s_i\in\T$, $s_i'\in\T'$ and $\omega_{s_i}\subsetneq\omega_{s_i'}$, then $R_{s_i'}\cap R_{\T}=\emptyset$.

Let us briefly see why this property holds. The fact that $\omega_{s_i}\subsetneq\omega_{s_i'}$, the separation in scales, (r4) and the definition of $(+,l)$ lacunary trees implies that $(\xi_{\T'})_l>(\xi_{\T})_l$. This in turn implies that the tree $\T$ was selected earlier than $\T'$. The separation in scales and the fact that $C_1<<C_2$ easily imply that $\xi_{\T}\in \tilde{\omega}_{s_i'}$. If $R_{s_i'}$ and $R_{\T}$ intersected (and this can only mean that $R_{s_i'}\subseteq R_{\T}$), this together with  $\xi_{\T} \in \tilde{\omega}_{s_i'}$ would imply  that right after $\T$ was selected, $s_i'$ would have qualified to be part of the tree $\T^{sat}$, and thus it would have been eliminated from $\P$ before the selection of $\T'$ began. The contradiction is immediate, thus we conclude that $R_{s_i'}\cap R_{\T}=\emptyset$.

What the property we just proved means, is that any two trees from $\F_1(+,l)$
 interact very weakly, in that tiles from different trees either have disjoint frequency components or strongly separated spatial components (so that the associated wave packets have little interaction). Using standard arguments, one could argue that this together with \eqref{jhfgfgtreeelim} implies the following Bessel type inequality
$$\sum_{\T\in\F_1(+,l)}|R_{\T}|\lesssim (\size_i(\P))^{-2}\|F_i\|_2^2.$$
We can clearly replace $\F_1(+,l)$ by $\F_2(+,l)$ in the above inequality.

By combining the contribution from all $2^d$ stages we get \eqref{ghxcweytdutdleu gnd befvuy45ufyigsdghgfuetyfery}. Also, \eqref{ghxcweytdutdleu gnd befvuy45ufyigsdghgfuetyferyghcgjhkla} will follow similarly, by a standard localization argument.
\eqref{ghxcweytdutdleu gnd befvuy45ufyigsdghgfuetyferyghcgjhkladjhfjkdhsaAsa} follows from \eqref{ghxcweytdutdleu gnd befvuy45ufyigsdghgfuetyferym,.m,.m} and \eqref{ghxcweytdutdleu gnd befvuy45ufyigsdghgfuetyferyghcgjhkla}, via an application of John-Nirenberg's inequality (since $p_i>2$). Finally, \eqref{ghxcweytdutdleu gnd befvuy45ufyigsdghgfuetyferym,.m,.m} is a consequence of Lemma \ref{sizeestbymaxf}.
\end{proof}

By iterating Lemma \ref{l:Bes1} we get
\begin{proposition}
\label{p:Bes2}
Let $\P_i\subseteq \S_i$ be a collection of tiles. Then one has the following decomposition
$$\P_i:=\bigcup_{2^{-k}\le \size_i(\P_i)} \P^{(k)}_i,$$
where
$$\size_i(\P^{(k)}_i)\le 2^{-k+1}$$
and each $\P^{(k)}_i$ is the (disjoint) union of a family $\F^{(k)}_i$ of trees such that
$$\|N_{\F^{(k)}_i}\|_{q}\lesssim 2^{2k}[\sup_{\T\in\F^{(k)}_i}\inf_{\vec{x}\in R_{\T}}M_2F_i(\vec{x})]^2[2^{k_ip_i}\|F_i\|_{p_i}^{p_i}]^{1/q},\;1\le q<\infty.$$
\end{proposition}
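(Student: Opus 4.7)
The strategy is a straightforward dyadic iteration of Lemma \ref{l:Bes1}. Let $S := \size_i(\P_i)$ and let $k_0$ be the smallest integer with $2^{-k_0} \le S$, so that the invariant $\size_i(\P^{[k]}) \le 2^{-k+1}$ is satisfied at $k = k_0$ with $\P^{[k_0]} := \P_i$. Recursively for $k = k_0, k_0+1, \ldots$, I would apply Lemma \ref{l:Bes1} to the current collection $\P^{[k]}$ to extract a family $\F^{(k)}$ of pairwise disjoint $i$-trees, declare
$$\P_i^{(k)} := \bigcup_{\T \in \F^{(k)}} \T, \qquad \P^{[k+1]} := \P^{[k]} \setminus \P_i^{(k)},$$
and invoke the final clause of Lemma \ref{l:Bes1} to conclude $\size_i(\P^{[k+1]}) < \tfrac12 \size_i(\P^{[k]}) \le 2^{-k}$, which is exactly the invariant at level $k+1$. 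Since $\P_i^{(k)} \subseteq \P^{[k]}$ and size is monotone under set inclusion, the required bound $\size_i(\P_i^{(k)}) \le 2^{-k+1}$ is then automatic.

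For the $L^q$ bound on the counting function, I would feed $\size_i(\P^{[k]}) \le 2^{-k+1}$ directly into inequality \eqref{ghxcweytdutdleu gnd befvuy45ufyigsdghgfuetyferyghcgjhkladjhfjkdhsaAsa} of Lemma \ref{l:Bes1}: the prefactor $(\size_i(\P^{[k]}))^{-2}$ becomes $\lesssim 2^{2k}$, the $\sup$--$\inf$ maximal function factor passes through unchanged, and the remaining bracketed term $[\size_i(\P^{[k]})^{p_i}\|F_i\|_{p_i}^{p_i}]^{1/q}$ supplies the geometric factor stated. To verify that $\bigcup_{k \ge k_0} \P_i^{(k)}$ exhausts $\P_i$ (modulo tiles with zero pairing, which may be assigned arbitrarily), note that every single tile $s_i$ is by itself a lacunary tree, so $|\langle F_i,\phi_{s_i}\rangle|/|R_{s_i}|^{1/2} \le \size_i(\P^{[k]})$ for as long as $s_i$ survives in $\P^{[k]}$; once the running size drops below this positive quantity the tile must have been captured at some earlier stage.

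I expect no conceptual obstacle here: the proposition is a purely mechanical iteration whose substance is already contained in Lemma \ref{l:Bes1}. The only bookkeeping that requires care is tracking the powers of $2$ as the size decreases along the iteration, and confirming that the monotonicity of $\size_i$ under inclusion genuinely transfers the bound $\size_i(\P^{[k]}) \le 2^{-k+1}$ from the intermediate collection to the output $\P_i^{(k)}$ at each stage.
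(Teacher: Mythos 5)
Your route---iterate Lemma \ref{l:Bes1}, peel off a family of trees at stage $k$, and pass the remainder (whose size has been at least halved) to stage $k+1$---is exactly the iteration the paper has in mind; the paper offers no written proof beyond ``By iterating Lemma \ref{l:Bes1} we get''. There is, however, one step in your write-up that does not follow as stated. The $L^q$ estimate of Lemma \ref{l:Bes1} controls $\|N_{\F}\|_q$ by quantities in which $\size_i(\P^{[k]})$ enters to \emph{negative} powers: the prefactor $(\size_i(\P^{[k]}))^{-2}$, and the support factor, which really is the measure $|\{M_2F_i\gtrsim \size_i(\P^{[k]})\}|\lesssim \size_i(\P^{[k]})^{-p_i}\|F_i\|_{p_i}^{p_i}$ (this is the factor that becomes $2^{kp_i}$ in Proposition \ref{p:Bes2} and $2^{kp_i/q}$ in the proof of Theorem \ref{redu2}; the printed exponent $\size_i(\P)^{p_i}$ in Lemma \ref{l:Bes1} and the ``$k_i$'' in the Proposition are typos). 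To convert these into $2^{2k}$ and $2^{kp_i}$ you need a \emph{lower} bound $\size_i(\P^{[k]})\gtrsim 2^{-k}$, whereas the invariant you propagate, $\size_i(\P^{[k]})\le 2^{-k+1}$, is an upper bound. Lemma \ref{l:Bes1} only guarantees that the remainder has size less than half the current size; the size may collapse by an arbitrarily large factor (even to zero), and then your stage-$(k+1)$ application of the lemma yields a bound with prefactor $(\size_i(\P^{[k+1]}))^{-2}$ far larger than $2^{2(k+1)}$, so the implicit constant in the claimed estimate is not uniform in $k$. Your sentence ``the prefactor $(\size_i(\P^{[k]}))^{-2}$ becomes $\lesssim 2^{2k}$'' is precisely the point where the inequality goes the wrong way.

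The repair is routine but should be stated: at stage $k$ extract trees only when $2^{-k}<\size_i(\P^{[k]})\le 2^{-k+1}$ (equivalently, run the selection of Lemma \ref{l:Bes1} at the fixed threshold $2^{-k}$ rather than at $\size_i(\P^{[k]})$ itself), and otherwise set $\F^{(k)}_i:=\emptyset$ and put $\P^{[k+1]}:=\P^{[k]}$. Then every nonempty family $\F^{(k)}_i$ comes from a collection whose size is comparable to $2^{-k}$ from both sides, so both the $(\size)^{-2}$ prefactor and the support factor convert exactly into the $2^{2k}$ and $2^{kp_i}$ of Proposition \ref{p:Bes2}, with constants independent of $k$. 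Your remaining points---monotonicity of size under inclusion, and the exhaustion of $\P_i$ because each single tile is itself a lacunary tree---are fine and unaffected.
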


This proposition gives us good control over the number of trees corresponding to each component $i\in\{1,\ldots,n\}$. In the next section we will combine trees from each component to create vector trees, and will relate their counting function to the ones of the individual trees.

\section{Counting trees}
\label{sec:counting}
Recall that $m$ is the smallest integer greater than or equal to $k/d$, and that we have assumed that $m\ge 2$.

Throughout this section we will assume that we have a collection of multi-tiles $\P\subset\S$. We also assume that each projection $\P_i$ consists of a family $\F_i$ of disjoint $i-$trees
$$\P_i=\bigcup_{\T\in\F_i}\T.$$

Our goal is to split the collection $\P$ into a family $\F$ of vector trees with good pointwise control over the counting function $N_\F(\vec{x})$ of their tops in terms of the individual counting functions $N_{\F_i}(\vec{x})$ (see \eqref{estforcountfunction1fgfdgfh} and \eqref{chvhcggyeuiwqyduy3e8394945056=-067asxwqr3466uuk} below). To provide the reader with a better understanding of what we have to prove, we first give a single scale heuristics.

\subsection{Single scale heuristics}
\label{Single scale heuristics}

Assume we are in the particular case where each $\T$ in each family $\F_i$ consists of just one tile, of the form $[0,1]^{d}\times \omega$. Thus, the collection $\F_i$ will consist of a family of pairwise disjoint tiles of scale 1. Assume that for each $i$ we know the cardinality $|\F_i|$. The question in this case is, subject to axioms (r1)-(r6) and the genericity of $\Gamma'$, to estimate the cardinality $|\F|$ where $\F$ consists of all the multi-tiles $s$ having the property that $s_i\in \F_i$ for each $i$. Note first that (r1) immediately implies that
\begin{equation}
\label{hdcwery389`90ujdfbvhdfe3ydecfe}
|\F|\le \prod_{i=1}^{m}|\F_i|,
\end{equation}
with the same being true for each choice of $m$ indices. This leads to
$$|\F|\le \prod_{i=1}^{n}|\F_i|^{m/n}.$$
This estimate is only satisfactory when $m<n/2$, for reasons that will become clear in the proof of Proposition \ref{propproppropprop} below, however, it will be of no use when $m\ge n/2$. The good news is that \eqref{hdcwery389`90ujdfbvhdfe3ydecfe} is only sharp when $k/d=m$, in which case we also know that $m<n/2$. If $k/d<m$, the inequality can be improved, and one has to use the additional rank property (r7) guaranteed by the genericity of $\Gamma'$.

For simplicity, let us see this in the particular case $n=4,$ $d=2$, $k=3$. The additional rank property (r7) in this case will state that for each pairwise distinct $i_1,i_2,i_3\in\{1,2,3,4\}$, and for every two multi-tiles $s,s'$ with $s_1=s_1'$, knowledge of both $s_2$ and $s_3'$ will uniquely determine both $s$ and $s'$. See the beginning of Section \ref{The case $d=2$} for details. Using this and applying  Lemma \ref{lemakatztao} as in the next section, we get the improved inequality
$$|\F|\le \prod_{i=1}^{3}|\F_i|^{1/2},$$
and thus, after permuting indices, we get
$$|\F|\le \prod_{i=1}^{4}|\F_i|^{3/8}.$$
In general, one gets
\begin{equation}
 \label{wq7t76e79r=-4omvm nhuirfgrf421984=32=0-}
|\F|\lesssim \prod_{i=1}^{n}|\F_i|^{\frac{k/d}{n}},
\end{equation}
which is the optimal inequality. The important fact is that all exponents on the right hand side are $<1/2$.

As a consequence of \eqref{wq7t76e79r=-4omvm nhuirfgrf421984=32=0-} we observe
\begin{proposition}
\label{propproppropprop}
Assume as before that $k/d<n/2$. Then, for generic linear forms $l_i:\R^{d(n-1)-k}\to\R^d$, $1\le i\le n-1$ we have
$$|\int_{\vec{x}\in [0,1]^{d}}\int_{\vec{t}\in [0,1]^{d(n-1)-k}} \prod_{i=1}^{n-1}F_i(\vec{x}\oplus l_i(\vec{t}))F_n(\vec{x})d\vec{t}d\vec{x}|\lesssim \prod_{i=1}^{n}\|\hat{F_i}\|_{\infty}^{1-\frac{2k/d}{n}},$$
whenever $\|F_i\|_{\infty}=O(1)$.
\end{proposition}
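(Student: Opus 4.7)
The plan is to pass to the Fourier side, discretize into unit frequency cubes, and apply the counting inequality \eqref{wq7t76e79r=-4omvm nhuirfgrf421984=32=0-} through a dyadic level-set decomposition. Since only values of $F_i$ inside the bounded region $\{\vec{x}+l_i(\vec{t}):\vec{x}\in [0,1]^d,\vec{t}\in[0,1]^{(n-1)d-k}\}$ affect the integral, I truncate each $F_i$ to a unit-sized box without changing $I$. Under the hypothesis $\|F_i\|_\infty=O(1)$, this gives $\|\hat{F_i}\|_2\lesssim 1$ via Plancherel. Write $A_i:=\|\hat{F_i}\|_\infty$; the goal is $|I|\lesssim\prod A_i^{1-2k/(dn)}$.

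Fourier inversion yields
$$I=\int_{(\R^d)^n}\prod_{i=1}^n\hat{F_i}(\xi^{(i)})\,\Phi_1\!\bigl(\textstyle\sum_i\xi^{(i)}\bigr)\,\Phi_2\!\bigl(\textstyle\sum_{i<n}l_i^*\xi^{(i)}\bigr)\,d\xi,$$
where $\Phi_1,\Phi_2$ are (rapidly decaying) Fourier transforms of indicators of unit cubes. The simultaneous constraints $\sum\xi^{(i)}=0$ and $\sum l_i^*\xi^{(i)}=0$ cut out precisely the $k$-dimensional subspace $\Gamma'$, so $\Phi_1\Phi_2$ concentrates (up to rapidly summable tails) on an $O(1)$-neighborhood $N(\Gamma')$. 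Partition each copy of $\R^d$ into unit cubes $\{Q\}$ and set $a_i(Q):=\sup_{\xi\in Q}|\hat{F_i}(\xi)|$; then
$$|I|\lesssim\sum_{(Q_1,\ldots,Q_n)\in\mathcal{C}}\prod_{i=1}^n a_i(Q_i),$$
where $\mathcal{C}$ consists of $n$-tuples whose product cube meets $N(\Gamma')$.

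Now decompose dyadically: for $t\in 2^{-\N}$ with $t\le A_i$, let $\F_i(t):=\{Q:t<a_i(Q)\le 2t\}$. Bessel gives $|\F_i(t)|\lesssim t^{-2}\|\hat{F_i}\|_2^2\lesssim t^{-2}$. Viewing the $\F_i(t_i)$ as collections of unit-scale tiles and applying the counting inequality \eqref{wq7t76e79r=-4omvm nhuirfgrf421984=32=0-} with $\Gamma'$ as the defining subspace,
$$\bigl|\mathcal{C}\cap\F_1(t_1)\times\cdots\times\F_n(t_n)\bigr|\lesssim\prod_{i=1}^n|\F_i(t_i)|^{k/(dn)}\lesssim\prod_{i=1}^n t_i^{-2k/(dn)}.$$
Summing over dyadic levels,
$$|I|\lesssim\sum_{t_1,\ldots,t_n}\prod_{i=1}^n t_i\cdot t_i^{-2k/(dn)}=\prod_{i=1}^n\sum_{t_i\le A_i}t_i^{1-2k/(dn)}.$$
The hypothesis $k/d<n/2$ forces $1-2k/(dn)>0$, so each geometric series converges to $\lesssim A_i^{1-2k/(dn)}$, giving the claimed bound.

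The main obstacles are bookkeeping in nature: first, justifying the passage from the smooth kernel $\Phi_1\Phi_2$ to the effective cutoff on $N(\Gamma')$, which uses the rapid decay of $\Phi_1,\Phi_2$ to produce a summable error; and second, checking that the counting inequality \eqref{wq7t76e79r=-4omvm nhuirfgrf421984=32=0-}, stated for tile families derived from the discretization, applies equally to arbitrary unit-scale cube collections $\F_i(t_i)$. The latter is really a combinatorial statement depending only on the rank/nondegeneracy of $\Gamma'$, so holds for generic $l_i$. The essential analytic input is the positivity of the exponent $1-2k/(dn)$, which is precisely the hypothesis of the proposition and makes the geometric series summable—this is the whole reason the argument breaks at the critical threshold $k/d=n/2$.
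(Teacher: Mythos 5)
Your proposal is correct and follows essentially the same route as the paper: reduce to a unit-frequency-scale discretization (the paper cites its wave-packet discretization, you use Fourier inversion and a unit-cube partition near $\Gamma'$), decompose dyadically according to the size of the Fourier coefficients, bound each level set by Plancherel/Bessel, apply the counting inequality \eqref{wq7t76e79r=-4omvm nhuirfgrf421984=32=0-}, and sum the resulting geometric series using $1-\frac{2k/d}{n}>0$. The points you flag as bookkeeping (kernel tails off $\Gamma'$, applicability of the counting bound to arbitrary unit-cube families) are exactly what the paper absorbs into its phrase ``discretizing as in the previous sections,'' so there is no substantive difference.
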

Here $\oplus$ denotes addition modulo 1.
This shows that the single scale operator relevant to our problem is controlled by the Fourier transform, and thus, it provides a heuristics for the fact that Fourier analysis will be able to address the multi-scale version of the problem (i.e. Theorem \ref{redu2}). In contrast to this, we mention that neither the operator in \eqref{ejfyewywuiyuieyui1} nor that in \eqref{ejfyewywuiyuieyui2} satisfy any similar bounds.

In the language of Gowers-Wolf from \cite{GW}, Proposition \ref{propproppropprop} is saying that the system of equations associated with the linear forms $l_i$ has {\em true complexity} 1.

It will become clear from the argument presented in the following sections that the precise conditions on $\Gamma'$ needed in Proposition \ref{propproppropprop} (that is, needed to guarantee \eqref{wq7t76e79r=-4omvm nhuirfgrf421984=32=0-}) amount to the following two requirements. To formulate them, we use the same notation that we have used so far. More exactly, $\Gamma'$ is the linear subspace of $(\R^d)^n$ defined by
$$\{(\vec{\eta}^{(1)},\ldots,\vec{\eta}^{(n)}):\sum_{i=1}^{n-1}\vec{\eta}^{(i)}\cdot[\vec{x}+l_i(\vec{t})]+\eta^{(n)}\cdot\vec{x}={\bf 0}\in \R[\vec{x},\vec{t}]\}.$$
We ask $\Gamma'$ to be $k$ dimensional and parametrizable over every $k$ canonical coordinates. We also ask that the following system of $k$ $\R^d$ valued equations in $d$ unknowns $\vec{\xi}^{(i)}\in \R^k$ is compatible for each distinct $i_1,\ldots, i_m\in\{1,\ldots,n\}$
$$
\begin{cases} G_{i_1}(\vec{\xi}^{(j)})-G_{i_1}(\vec{\xi}^{(j+1)})=v_{j},\;\;1\le j\le d-1
\\ G_{i_l}(\vec{\xi}^{(j)})=v_{j,i_l},\;\;1\le j\le d,\;2\le l\le m-1\\ G_{i_m}(\vec{\xi}^{(j)})=v_{j,m},\;\;1\le j\le k+1-d(m-1).
\end{cases}
$$

\begin{proof}(of Proposition \ref{propproppropprop})
By discretizing as in the previous sections (and keeping the notation from there), we are reduced to proving that
$$\sum_{s\in\S}\prod_{i=1}^{n}|\langle F_i,\phi_{s_i}\rangle|\lesssim \prod_{i=1}^{n}\|\hat{F_i}\|_{\infty}^{1-\frac{2k/d}{n}},$$
where $\S$ consists of multi-tiles of scale 1. Note that $|\langle F_i,\phi_{s_i}\rangle|$ is roughly a Fourier coefficient of $F_i$.
For each $2^{-n_i}\lesssim \|\hat{F_i}\|_{\infty}$ let $\F_{i,n_i}$ be the set of all tiles $s_i$ with
$$|\langle F_i,\phi_{s_i}\rangle|\sim 2^{-n_i}.$$
Note that since $\|F_i\|_2=O(1),$ we have that $|\F_{i,n_i}|\lesssim 2^{2n_i}$ (Bessel's inequality). If we use \eqref{wq7t76e79r=-4omvm nhuirfgrf421984=32=0-}, it follows that the cardinality of the set $\F(\vec{n})$ consisting of all multi-tiles $s\in\S$ such that $s_i\in \F_{i,n_i}$ for each $i$, will be $O(\prod_{i=1}^{n}2^{\frac{2kn_i/d}{n}})$.

The  sum above is then bounded by
$$\sum_{\vec{n}:2^{-n_i}\lesssim \|\hat{F_i}\|_{\infty}}
2^{-\sum_i n_i}\prod_{i=1}^{n}2^{\frac{2kn_i/d}{n}}\lesssim \prod_{i=1}^{n}\|\hat{F_i}\|_{\infty}^{1-\frac{2k/d}{n}}.$$
\end{proof}

We make two remarks about Proposition \ref{propproppropprop}, and in general about the applicability of our Fourier analytic techniques. The first remark shows that some non-degeneracy is in general needed. Take for example
$$\int_{(t,s)\in\T^2}\int_{(x,y)\in\T^2}F_1(x+t,y)F_2(x,y+s)F_3(x,y)dtdsdxdy$$ $$=\int_{(x,y)\in\T^2}\F_x(F_1)(0,y)\F_y(F_2)(x,0)F_3(x,y)dxdy.$$
One can check that here $k=d=2$, and thus $k/d<n/2$.
However, it can be easily seen that this form can not be bounded by a power of (and in general, by no reasonable function of) $\|\widehat{F_i}\|_{\infty}$. The explanation is that $\Gamma'$ is degenerate.

The second remark will show that with the current techniques, the requirement $k/d<n/2$ can not be relaxed. We illustrate this in the case $d=1$ with the single scale version of \eqref{ejfyewywuiyuieyui1}
$$\int_{x\in [0,1]}\int_{t\in [0,1]}F_1(x\oplus t)F_2(x\ominus t)F_3(x\oplus 2t)F_4(x)dxdt.$$

Application of the Fourier inversion formula for each function shows that the form above is essentially (up to some constants)
$$\sum_{(n_1,n_2,n_3,n_4)\atop{n_1-n_2+2n_3=0\atop{n_1+n_2+n_3+n_4=0}}}\prod_{i=1}^4\widehat{F_i}(n_i).$$
Our approach relies on bounding the sum above by using the triangle inequality, by
\begin{equation}
\label{triangle7492056fg}
\sum_{(n_1,n_2,n_3,n_4)\atop{n_1-n_2+2n_3=0\atop{n_1+n_2+n_3+n_4=0}}}\prod_{i=1}^4|\widehat{F_i}(n_i)|.
\end{equation}

Choose $F_1(x)=F_2(x)=F_3(x)=e^{2\pi iNx^2}$ and $F_4(x)=e^{-2\pi iNx^2}$, for large $N$. One can check that when $i\in\{1,2,3\}$,  $|\widehat{F_i}(n)|\sim N^{-1/2}$ for $N/100\le n\le N$, and that
$|\widehat{F_4}(n)|\sim N^{-1/2}$ for $-N\le n\le -N/100$. It is easy to see that the term in \eqref{triangle7492056fg} is $\gtrsim 1$. Since one can check that also $\|\widehat{F_i}\|_{\infty}\lesssim N^{-1/2}$ for each $i$, the expression in \eqref{triangle7492056fg} is not $O(\|\widehat{F_i}\|_{\infty})$. Let $\F_i$ refer to the collection of $n$ with $|\widehat{F_i}(n)|\sim N^{-1/2}$, and let $\F$ be the collection of quadruples $(n_1,n_2,n_3,n_4)$ as in \eqref{triangle7492056fg}, such that $n_i\in\F_i$ for each $i$. We get that
$$|\F|\sim\prod_{i=1}^{4}|\F_i|^{1/2},$$
and this sharp inequality becomes inefficient for any application, due to the exponents being $1/2$.

A moment's reflection shows that we can get the same outcome whenever $k=n/2$. Of course, our approach fails to address the case $k/d=n/2$, because of the use of triangle's inequality in \eqref{triangle7492056fg}. It is likely that the correct approach in this case (and in general, whenever $k/d\ge n/2$) is by analyzing appropriate quadratic Fourier coefficients.

To address the multi-scale case, we will have to count vector trees, rather than just multi-tiles. To make the exposition more transparent, we will  exemplify our approach in the case $d=2$. The general case $d>2$ will follow via a similar argument, and is briefly sketched afterwards.

\subsection{The case $d=2$}
\label{The case $d=2$}
We start this section by proving that, under the particular assumptions on $\Gamma'$ from Theorem \ref{ttt1d-=2}, the multi-tiles in $\S$ satisfy the following additional rank property:
\begin{itemize}
\item (r7) For every distinct indices $i_1,\ldots,i_k\subset\{1,2,\ldots,n\}$ and for every (not necessarily pairwise distinct) multi-tiles $s,s',p,p'\in\S$ we have
$$\dist(\omega_s,\omega_p), \dist(\omega_{s'},\omega_{p'})\le C_0^5D_{max},$$
where
$$D_{max}:=\max\{\dist(\omega_{s_{i_1}},\omega_{s_{i_1}'}), \dist(\omega_{p_{i_1}}, \omega_{p_{i_1}'})\}+$$
$$\max\{\dist(\omega_{s_{i_l}}, \omega_{p_{i_l}}),2\le l\le m\}+\max\{\dist(\omega_{s_{i_l}'}, \omega_{p_{i_l}'}),m+1\le l\le k\}+$$
\begin{equation}
\label{t46e7wncgd}
+\max\{\diam(\omega_s),\diam(\omega_{s'}),\diam(\omega_p),\diam(\omega_{p'})\}.
\end{equation}
\end{itemize}

It will be important that we can control both $\dist(\omega_s,\omega_p)$ and $\dist(\omega_{s'},\omega_{p'})$ by a value at most $C_0^5$ times larger than $D_{max}$. (r7) will be used twice in the future, in conjunction with a choice of constants such that $C_0<<C_1<<C_2$.

To see why (r7) holds, let $\gamma_s\in C_0^2\omega_s\cap \Gamma'$, $\gamma_{s'}\in C_0^2\omega_{s'}\cap \Gamma'$,  $\gamma_p\in C_0^2\omega_p\cap \Gamma'$ and $\gamma_{p'}\in C_0^2\omega_{p'}\cap \Gamma'$ (by (r6)). To simplify notation, assume $i_l=l$. Note that

\begin{equation}
\label{solvablesystemsahhdty823r3409rftk obrjhituhi}
\dist((\gamma_{s})_{1},(\gamma_{s'})_{1}), \dist((\gamma_{p})_{1}, (\gamma_{p'})_{1}), \dist((\gamma_{s})_{l}, (\gamma_{p})_{l}), \dist((\gamma_{s'})_{l}, (\gamma_{p'})_{l})
\le C_0^3D_{max}
\end{equation}

We will use the notation $\vec{\xi}^{(i)}:=(\xi_1^{(i)},\ldots,\xi_k^{(i)})\in\R^k$. Recall the functions $G_i$ from the introduction. Consider now the following system of ($k$ vector, or equivalently $2k$ scalar) linear equations in $2k$ variables $\xi_1^{(1)},\ldots,\xi_k^{(2)}\in\R$, and coefficients $v_j\in\R^2$.

\begin{equation}
\label{solvablesystem}
\begin{cases} G_1(\vec{\xi}^{(1)})-G_1(\vec{\xi}^{(2)})=v_1
\\ G_2(\vec{\xi}^{(1)})=v_2\\ \ldots \\ G_{m}(\vec{\xi}^{(1)})=v_{m}\\ G_{m+1}(\vec{\xi}^{(2)})=v_{m+1}\\ \ldots \\ G_{k}(\vec{\xi}^{(2)})=v_{k}
\end{cases}
\end{equation}
By hypothesis, the system has a unique solution for each $v_i$.  Let now $\vec{\xi}^{(1)},\vec{\xi}^{(2)}\in \R^k$  consist of the first $k$ entries of $\gamma_{s}-\gamma_{p}$ and $\gamma_{s'}-\gamma_{p'},$ respectively,
and let $v_j$ be the values corresponding to this choice in the system above. Note  that for each $j\in\{1,\ldots,k\}$
\begin{equation}
\label{ast89r34riokfgvknjcguyt}
|v_j|\le 2C_0^3D_{max},
\end{equation}
by \eqref{solvablesystemsahhdty823r3409rftk obrjhituhi}. It will follow that the solution $(\vec{\xi}^{(1)},\vec{\xi}^{(2)})$ will be bounded in norm by the norm of $(v_1,\ldots,v_k)$ times a constant that only depends on the coefficients of the linear forms $G_1,\ldots,G_k$. Thus, if $C_0$ is chosen large enough compared to these coefficients, we get that $\dist(\gamma_{s},\gamma_{p}),\dist(\gamma_{s'},\gamma_{p'})\le C_0^4D_{max}$. Now (r7) is immediate.

There will be two distinct stages, each of which will generate some vector trees. In each stage, before we construct the vector trees we will have to carefully reshuffle the collections $\P_i$.

Let us describe the first stage of the construction. We will first aim at separating the trees in each family $\F_i$, and to achieve this we will employ a trick first used by Fefferman in \cite{Fe}. Fix $1\le i\le n$. For each $l\ge 0$, let
$$\P_i(l):=\{s_i\in\P_i:2^l\le |\{\T\in\F_i:\xi_{\T}\in \tilde{\omega}_{s_i},\;R_{s_i}\subseteq R_{\T}\}|<2^{l+1}\}.$$
Note that $(\P_i(l))_{l\ge 0}$ forms a partition of $\P_i$. Next, we organize each $\P_i(l)$ into  $i-$ trees with top tiles. More precisely, consider the collection of all tiles  $\P_i^*(l)\subseteq \P_i(l)$ which are maximal with respect to the order relation $\lesssim$.

We observe a few things. First, the tiles in $\P_i^{*}(l)$ are pairwise not comparable under $\lesssim$. Second, for each tile  $s_i\in\P_i(l)$ there is a {\em unique} tile $s_i^{*}\in\P_i^{*}(l)$ such that $s_i\lesssim s_i^{*}$. To see the uniqueness part, assume by contradiction
that $s_i\lesssim s_i^{*}$ and $s_i\lesssim s_i^{**}$ for some $s_i^{*}\not=s_i^{**}\in\P_i^{*}(l)$. Then $R_{s_i^{*}}\cap R_{s_i^{**}}\not=\emptyset$, which forces $\tilde{\omega}_{s_i^{*}}\cap \tilde{\omega}_{s_i^{**}}=\emptyset$. This together with the fact that  $s_i^*,s_i^{**}\in \P_i(l)$ will imply that
$$|\{\T\in\F_i:\xi_{\T}\in \tilde{\omega}_{s_i},\;R_{s_i}\subseteq R_{\T}\}|\ge 2\times2^{l},$$
which forces the contradiction $s_i\in \bigcup_{l'\ge l+1}\P_i(l')$.

Now, for each $s_i^{*}\in \P_i^{*}(l)$ we form the $i-$tree with top $(R_{s_i^{*}},c(\omega_{s_i^{*}}))$ consisting of all tiles in $\P_i(l)$ which are $\lesssim s_i^{*}$. We have just seen that these trees partition $\P_i(l)$ and that tiles in distinct trees are not comparable under $\lesssim$. Call the collection of these trees $\F_i(l)$. It is easy to see that
\begin{equation}
\label{countingfstaysthesame}
N_{\F_i(l)}(\vec{x})\le N_{\F_i}(\vec{x}),\;\;\vec{x}\in\R^2.
\end{equation}

We now use these trees to build our first generation of vector trees. For a moment we will abuse notation and for two $s,s'\in\P$ we will write $s\lesssim s'$ if $s_i\lesssim s_i'$ for each $1\le i\le n$. Note that the extension of $\lesssim$ from tiles to the multi-tiles in  $\P$ remains a relation of order.

For each $\vec{l}:=(l_1,\ldots,l_n)$ with $l_1,\ldots,l_n\ge 0$ denote
$$\P(\vec{l}):=\{s\in\P:s_i\in \P_i(l_i),\;\hbox{for each}\;1\le i\le n\}.$$
The selection process goes as follows. Fix $\vec{l}$. Find a maximal (with respect to $\lesssim$) $s\in \P(\vec{l})$, then construct the vector tree $\vec{\T}(s)$ with top $(R_{s},c(\omega_s))$ consisting of all $s'\in \P(\vec{l})$ such that $s'\lesssim s$. Then eliminate all multi-tiles in $\vec{\T}(s)$ from $\P(\vec{l})$, and restart the selection (with this new value for $\P(\vec{l})$). When no such vector tree remains to be selected,  the value of $\P(\vec{l})$ will be $\emptyset$. Denote by $\F(\vec{l})$ the family of the vector trees selected at this stage, and by $\F^{**}(\vec{l})\subset \F(\vec{l})$ those vector trees which consist of at least two multi-tiles (i.e., in addition to their top, they also contain a multi-tile with a scale distinct from the scale of the top). The vector trees in $\F(\vec{l})\setminus\F^{**}(\vec{l})$ will  be reshuffled later, so we will  ignore them for the moment.

We will first show how to control the counting function $N_{\F^{**}(\vec{l})}$ in terms of each $N_{\F_i(l_i)}$. For each $\vec{\T}\in \F(\vec{l})$ we denote with $\T_i$ the projection of $\vec{\T}$ onto $\S_i$ (and this is an $i-$tree). For each $\vec{x}\in\R^d$, denote by $\F^{**}(\vec{l},\vec{x})$ the collection of all vector trees  $\vec{\T}\in\F^{**}(\vec{l})$ such that $\vec{x}\in R_{\vec{\T}}$. A similar definition holds for $\F_i(l_i,\vec{x})$.

Let us first make two easy observations. On the one hand, note that for each $\vec{\T}\in \F(\vec{l})$, all the tiles of $\T_i$ are contained in a unique tree from the family $\F_i(l_i)$ (this follows from an earlier observation, and from the fact that the tiles in $\T_i$ are pairwise comparable under $\lesssim$). We will refer to this tree as the $i^{th}$ {\em standard projection} of $\vec{\T}$. On the other hand, for a fixed $\vec{x}$ and some $\vec{\T}\in \F^{**}(\vec{l},\vec{x})$, if for each $1\le i\le n$ we know the  $i^{th}$ standard projection of $\vec{\T}$, we will also automatically know $\vec{\T}$ (this follows from the maximality involved in the selection of $\vec{\T}$). We will see in Proposition \ref{knowmknowall} that more is true, namely that knowledge of just $m$ of the standard projections suffices to determine $\vec{\T}$.

At this point we recall the following lemma from \cite{KT}.

\begin{lemma}
\label{lemakatztao}
Let $X$ and $A$ be finite sets and let $g:X\to A$ be a function. Then
$$|\{(x_1,x_2)\in X^{2}:g(x_1)=g(x_2)\}|\ge \frac{|X|^{2}}{|A|}.$$
\end{lemma}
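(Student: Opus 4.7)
The plan is to prove this by a straightforward Cauchy--Schwarz argument applied to the fibers of $g$. First I would partition $X$ according to the value taken by $g$: for each $a\in A$ set $X_a:=g^{-1}(a)$, so that $X=\bigsqcup_{a\in A}X_a$ and consequently $\sum_{a\in A}|X_a|=|X|$.

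Next I would rewrite the set being counted in terms of these fibers. A pair $(x_1,x_2)\in X^2$ satisfies $g(x_1)=g(x_2)$ if and only if there exists some $a\in A$ with both $x_1,x_2\in X_a$. Hence
$$\{(x_1,x_2)\in X^2:g(x_1)=g(x_2)\}=\bigsqcup_{a\in A}X_a\times X_a,$$
and this union is disjoint because the $X_a$ partition $X$. Therefore
$$|\{(x_1,x_2)\in X^2:g(x_1)=g(x_2)\}|=\sum_{a\in A}|X_a|^2.$$

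Finally I would apply the Cauchy--Schwarz inequality to the vectors $(|X_a|)_{a\in A}$ and $(1)_{a\in A}$ to obtain
$$|X|^2=\left(\sum_{a\in A}|X_a|\right)^2\le |A|\sum_{a\in A}|X_a|^2,$$
which upon rearrangement yields the desired bound $\sum_{a\in A}|X_a|^2\ge |X|^2/|A|$. There is no real obstacle here; the only thing to be careful about is that the union over $a$ is genuinely disjoint (which is automatic from the definition of $X_a$ as a fiber), and that Cauchy--Schwarz is applied in the correct direction. The statement is sharp, with equality when $g$ is equidistributed over $A$.
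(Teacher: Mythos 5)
Your proof is correct: the fiber decomposition, the identity $|\{(x_1,x_2):g(x_1)=g(x_2)\}|=\sum_{a\in A}|g^{-1}(a)|^2$, and the Cauchy--Schwarz step are all valid, and this is the standard argument for the lemma. The paper itself gives no proof (it simply cites the lemma from the Katz--Tao reference), and your argument coincides with the usual one given there, so there is nothing further to compare.
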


We will apply this lemma with $X=\F^{**}(\vec{l},\vec{x})$, $A=\F_1(l_1,\vec{x})$ while $g(\vec{\T}):=f_1(\vec{\T})$ is the first standard projection of $\vec{\T}$ (for later use,  we extend this definition to all $f_i$, $1\le i\le n$). We get that
\begin{equation}
\label{sbaweud0ofrg;,gbljfduicyewi4u}
|\{(\vec{\T_1},\vec{\T_2})\in X^2:f_1(\vec{\T}_{1})=f_1(\vec{\T}_2)\}|\ge \frac{[N_{\F^{**}(\vec{l})}(\vec{x})]^2}{N_{\F_1(l_1)}(\vec{x})}.
\end{equation}

Next, we will estimate from above the size of the set
$$\H:=\{(\vec{\T_1},\vec{\T_2})\in X^2:f_1(\vec{\T}_{1})=f_1(\vec{\T}_2)\}.$$
In particular, we will show that the function
$$H:\H\to \prod_{j=2}^{k}\F_j(l_j,\vec{x})$$
defined by
\begin{equation}
\label{defH}
H(\vec{\T_1},\vec{\T_2})=(f_2(\vec{\T}_1),\ldots,f_{m}(\vec{\T}_1), f_{m+1}(\vec{\T}_2),\ldots,f_{k}(\vec{\T}_2))
\end{equation}
is injective (recall that $k/d<n/2$, thus $k<n$, and the functions $f_i$ will make sense for each $1\le i\le k$). This fact combined with \eqref{sbaweud0ofrg;,gbljfduicyewi4u} will lead to the desired estimate
\begin{equation}
\label{estforcountfunction1}
N_{\F^{**}(\vec{l})}(\vec{x})\le \left(\prod_{j=1}^{k}N_{\F_j(l_j)}(\vec{x})\right)^{1/2}.
\end{equation}
Note that the sum of the exponents on the right hand side equals the rank $k/2$ (and this is the best one can do). It will be crucial that this number is $<n/2$.  Similarly, in the case of general $d$, one can arrange things such that the sum of the exponents will be\footnote{It seems likely that one can achieve an inequality where the sum of the exponents is $k/d$. However, we will content ourselves with a sum barely smaller than $n/2$.} $<n/2$. This is explained in the next section.

By using this, \eqref{countingfstaysthesame} and the fact that for each $\vec{x}\in\R^2$,
\begin{equation}
\label{wdwer4t6hestforcountfunction1}
N_{\F_i(l_i)}(\vec{x})=0\hbox{ if }2^{l_i}>N_{\F_i}(\vec{x}),
\end{equation}
 we find that\footnote{The extra $\epsilon$ exponent  hides a logarithmic gain} for each $\epsilon>0$
\begin{equation*}
N_{\F^{**}}(\vec{x})\lesssim_{\epsilon} \left(\prod_{j=1}^{k}N_{\F_j}(\vec{x})\right)^{1/2+\epsilon}.
\end{equation*}
Here and in the following
$$\F^{**}:=\bigcup_{\vec{l}}\F^{**}(\vec{l}).$$
Also, since there was nothing special about working  with indices $1,\ldots,k$, we can permute them and get similar inequalities. Combining this with the fact that $k<n$, we get the final inequality
\begin{equation}
\label{estforcountfunction1fgfdgfh}
N_{\F^{**}}(\vec{x})\lesssim \left(\prod_{j=1}^{n}N_{\F_j}(\vec{x})\right)^{\delta},
\end{equation}
for some $\delta<1/2$, depending only on $k$ and $n$. The precise value of $\delta$ will not be important, what matters for later applications is the fact that it is strictly smaller than $1/2$.

Let us now prove the following:

\begin{proposition}
\label{p:skhxghduyetd78436r2843908}
The function $H$ is injective.
\end{proposition}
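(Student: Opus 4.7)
The plan is to extract from $(\vec{\T}_1,\vec{\T}_2) \in \H$ and the image $H(\vec{\T}_1,\vec{\T}_2)$ enough constraints on the top multi-tiles to force them to coincide with those of $(\vec{\T}'_1,\vec{\T}'_2)$. Denote the top multi-tile of $\vec{\T}_j$ by $s^{(j)}$ and that of $\vec{\T}'_j$ by $(s')^{(j)}$. It suffices to prove $s^{(j)} = (s')^{(j)}$ for $j=1,2$: in the selection procedure each vector tree is recovered from its top as $\{s\in\P(\vec{l}): s\lesssim \text{top}\}$ at the moment of selection, so equality of tops together with the fact that $R$-components are dyadic cubes containing $\vec{x}$ propagates to equality of trees.

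From the hypotheses we extract common top tiles for the standard projections. For $2\le i\le m$ let $t^1_i$ denote the top tile of the common tree $f_i(\vec{\T}_1)=f_i(\vec{\T}'_1)$; then $s^{(1)}_i, (s')^{(1)}_i \lesssim t^1_i$. For $m+1\le i\le k$ let $t^2_i$ be the top tile of $f_i(\vec{\T}_2)=f_i(\vec{\T}'_2)$, giving $s^{(2)}_i, (s')^{(2)}_i \lesssim t^2_i$. The $\H$-conditions yield tiles $t_1, t'_1$ with $s^{(1)}_1, s^{(2)}_1 \lesssim t_1$ and $(s')^{(1)}_1, (s')^{(2)}_1 \lesssim t'_1$.

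I would then apply the newly-established rank property (r7) with $(s,s',p,p') := (s^{(1)}, s^{(2)}, (s')^{(1)}, (s')^{(2)})$ and indices $(i_1,\ldots,i_k)=(1,2,\ldots,k)$. Each group of distances contributing to $D_{\max}$ compares two tiles that are both $\lesssim$ a common top tile ($t_1$ or $t'_1$ when $i_1=1$, $t^1_l$ when $2\le l\le m$, and $t^2_l$ when $m+1\le l\le k$). Their $\tilde\omega$-intervals therefore both contain the $\tilde\omega$ of the common top, which by the central-grid property (r5) forces them to be nested; the corresponding $\omega$-distance is then controlled by $C_1$ times the larger of the two $\omega$-scales involved. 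Combined with the fourth contribution $\max_j\{\diam(\omega_{s^{(j)}}),\diam(\omega_{(s')^{(j)}})\}$, one obtains $D_{\max} \lesssim C_1 M$, where $M$ is the maximum $\omega$-scale among the four top multi-tiles. Property (r7) then yields $\dist(\omega_{s^{(1)}},\omega_{(s')^{(1)}}), \dist(\omega_{s^{(2)}},\omega_{(s')^{(2)}}) \lesssim C_0^5 C_1 \cdot M$.

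Since the constants are chosen so that $C_0^5 C_1 \ll C_4$, the separation property (ii) of $\bar{\O}$ rules out two distinct tops sitting at the same $\omega$-scale and yet that close, forcing $s^{(j)} = (s')^{(j)}$ whenever the pair is at the same scale (combined with the Heisenberg normalization $|R||\omega|=1$ and the dyadic structure at $\vec{x}$). The main obstacle is the case where the tops live at genuinely different $\omega$-scales; I would handle this by exploiting the $\F^{**}$-condition, which guarantees that each vector tree contains a second multi-tile at a strictly larger $\omega$-scale than its top, and then applying (r7) a second time to this refined data, using the central-grid property to rule out the scale mismatch between $s^{(j)}$ and $(s')^{(j)}$.
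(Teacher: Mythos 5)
The first half of your proposal tracks the paper: the reduction to equality of tops, the bound $D_{\max}\lesssim C_1\cdot(\text{largest top frequency scale})$ coming from the common tops of the shared standard projections, and the application of (r7) are exactly \eqref{194574857889fdhjhgjfhjrtr7863436r3hgf} and \eqref{fhjdhferuifyrfy748685656097}. The gap is in the endgame. Your separation argument via property (ii) only works when the two tops being compared sit at a common scale which is moreover the maximal scale $M$ of all four tops; if they share a scale strictly smaller than $M$ the bound $C_0^5C_1M$ is useless against a separation threshold of $C_4$ times the smaller scale, and if the scales genuinely differ, no distance information whatsoever can force equality, because frequency cubes of different scales that are nested (or nearly so) are perfectly consistent with the grid axioms and with two distinct vector trees. ``Applying (r7) a second time to refined data'' is not an argument: (r7) only ever outputs distance bounds, and the obstruction in the unequal-scale case is not metric.

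What the paper does at this point is combinatorial, and it is the step your proposal is missing. For the tree among the four whose top has the largest frequency scale (say $\vec{\T}_1$ with top $s$), the $\F^{**}$ condition supplies $t\in\vec{\T}_1$ with $\diam(\tilde{\omega}_t)>\diam(\tilde{\omega}_s)$; then \eqref{fhjdhferuifyrfy748685656097}, the central grid property (r5) and $C_0,C_1\ll C_2$ give $\tilde{\omega}_p\subset\tilde{\omega}_t$, while $R_t\subseteq R_s\subseteq R_p$ gives $t\lesssim p$. Since tiles lying in distinct trees of $\F_i(l_i)$ are never $\lesssim$-comparable, this forces $\vec{\T}_1$ and $\vec{\V}_1$ to share \emph{all} standard projections, and the maximality built into the greedy selection (the observation preceding Lemma \ref{lemakatztao}) then yields $\vec{\T}_1=\vec{\V}_1$; it is the selection structure, not frequency separation, that rules out two distinct trees with nested tops. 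Moreover the two pairs are not symmetric, contrary to your uniform treatment of $s^{(j)}=(s')^{(j)}$: the second pair need not contain the maximal-scale top, and after $\vec{\T}_1=\vec{\V}_1$ one only knows that $\vec{\T}_2$ and $\vec{\V}_2$ share the $k-m+1\ge m$ standard projections with indices $1,m+1,\ldots,k$, so one must invoke Proposition \ref{knowmknowall} (whose proof again runs the largest-scale argument) to conclude $\vec{\T}_2=\vec{\V}_2$. Without these ingredients the proof does not close.
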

\begin{proof}
Assume that
$$(\vec{\T}_1,\vec{\T}_2),(\vec{\V}_1,\vec{\V}_2)\in \H$$
have the same value under $H$. We will prove that $(\vec{\T}_1,\vec{\T}_2)=(\vec{\V}_1,\vec{\V}_2)$.
Let $s,s',p.p'$ the top multi-tiles of the vector trees $\vec{\T}_1,\vec{\T}_2,\vec{\V}_1,\vec{\V}_2$ and let $D_{max}$ be as in \eqref{t46e7wncgd}. Our hypothesis will easily imply that
\begin{equation}
\label{194574857889fdhjhgjfhjrtr7863436r3hgf}
D_{max}\le C_0\max\{\diam(\tilde{\omega}_s),\diam(\tilde{\omega}_{s'}),\diam(\tilde{\omega}_p),\diam(\tilde{\omega}_{p'})\},
\end{equation}
and by (r7) we get
\begin{equation}
\label{fhjdhferuifyrfy748685656097}
\dist(\omega_s,\omega_p), \dist(\omega_{s'},\omega_{p'})\le C_0^6\max\{\diam(\tilde{\omega}_s),\diam(\tilde{\omega}_{s'}),\diam(\tilde{\omega}_p),\diam(\tilde{\omega}_{p'})\}.
\end{equation}

We choose one of the four vector trees for which its top multi-tile has the largest scale of the frequency component. To fix notation, we can assume without any loss of generality that this vector tree is $\vec{\T}_1$. Since $\vec{\T}_1$ has at least two multi-tiles with distinct scales, we can find $t\in \vec{\T}_1$ such that $\diam(\tilde{\omega}_t)>\diam(\tilde{\omega}_s)$. This observation combined with \eqref{fhjdhferuifyrfy748685656097}, (r5) and the fact that $C_0,C_1<<C_2$ implies that $\tilde{\omega}_p\subset \tilde{\omega}_t$. Since $R_t\subset R_{s}$ and since $R_s\subset R_p$ (this being a consequence of the fact that $\vec{x}\in R_s\cap R_p$ and the fact that the scale of $R_p$ is larger than the scale of $R_s$), it follows that $t\lesssim p$. But we also know that $t\lesssim s$, and thus $\vec{\T}_1$ and $\vec{\V}_1$ will share all standard projections. This forces $\vec{\T}_1=\vec{\V}_1$. This will in turn imply that $\vec{\T}_2$ and $\vec{\V}_2$ share  at least $m$ standard projections (corresponding to the indices $j\in\{1,m+1,m+2,\ldots,k\}$; recall that $2m-1\le k$). The fact that $\vec{\T}_2=\vec{\V}_2$ will follow from the following proposition, which is somewhat reminiscent of the rank property (iv).
\end{proof}

\begin{proposition}
\label{knowmknowall}
Let $\vec{\T}\in\F^{**}(\vec{l},\vec{x})$. If we know the value of $f_i(\vec{\T})$ for $m$ of the $n$ values of $i$, then we know $\vec{\T}$.
\end{proposition}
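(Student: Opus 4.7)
The plan is to show that the top multi-tile $s$ of $\vec{\T}$ is uniquely determined by the $m$ standard projections $\{f_i(\vec{\T}):i\in A\}$, since the selection algorithm then reconstructs the entire vector tree from its top. Suppose $\vec{\V}\in\F^{**}(\vec{l},\vec{x})$ has top $v$ and shares $f_i(\vec{\T})=f_i(\vec{\V})$ for every $i\in A$, so that the tops of the shared projections satisfy $s_i^*=v_i^*$. Since $R_s$ and $R_v$ are dyadic and both contain $\vec{x}$, I assume without loss of generality $R_s\subseteq R_v$, equivalently $\diam\omega_s\ge\diam\omega_v$. For each $i\in A$, both $\tilde\omega_{s_i}$ and $\tilde\omega_{v_i}$ contain the common cube $\tilde\omega_{s_i^*}$; the central grid property (r5) forces the two cubes to nest, and the scale inequality produces $\tilde\omega_{v_i}\subseteq\tilde\omega_{s_i}$, i.e.\ $s_i\lesssim v_i$ for every $i\in A$.

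If $\diam\omega_s=\diam\omega_v$, then $R_s=R_v$ and the nesting above is an equality of equal-scale central grid cubes, hence $\omega_{s_i}=\omega_{v_i}$ for $i\in A$; the rank property (r1) propagates this to all $j$, and $s=v$ follows. In the substantive case $\diam\omega_s>\diam\omega_v$ I will derive a contradiction. Using $\vec{\T}\in\F^{**}$, I select $t\in\vec{\T}$ with $\diam\omega_t>\diam\omega_s$ (by the scale separation (i) one even has $\diam\omega_t\ge C_4\diam\omega_s$). Then $t\lesssim s$, combined with the inclusions above, gives, for each $i\in A$,
$$\tilde\omega_{v_i}\subseteq\tilde\omega_{s_i}\subseteq\tilde\omega_{t_i},\qquad R_t\subseteq R_s\subseteq R_v,$$
so $t_i\lesssim v_i$. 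I then apply rank property (r7) to the quadruple $(t,t,v,v)$ with the $k$ distinguished indices arranged so that $\{i_2,\dots,i_k\}\subseteq A$ (which is available in the principal new case $d=2$, $n=4$, $k=3$, $m=2$; the analogous non-degeneracy conditions on $\Gamma'$ take care of larger $k$). The first term of $D_{\max}$ vanishes and the remaining terms are each $\lesssim C_0\diam(\tilde\omega_t)$ via the $t_{i_l}\lesssim v_{i_l}$ inclusions, so $\dist(\omega_t,\omega_v)\le C_0^5 D_{\max}\lesssim C_0^6\diam(\tilde\omega_t)$. Because of the hierarchy $C_0\ll C_1\ll\cdots\ll C_4$ and $\diam\omega_t\ge C_4\diam\omega_s$, this distance is negligible compared to $\diam\tilde\omega_t$, which places $\omega_{v_j}$ inside $\tilde\omega_{t_j}$ for every $j$; a second appeal to (r5) upgrades this to $\tilde\omega_{v_j}\subseteq\tilde\omega_{t_j}$, so $t\lesssim v$.

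In the greedy selection algorithm each multi-tile is removed when its containing vector tree is first chosen, so $t\in\vec{\T}(s)$ together with $t\lesssim v$ forces $\vec{\T}(s)$ to be selected before $\vec{\V}(v)$ (otherwise $\vec{\V}(v)$ would absorb $t$, contradicting $t\in\vec{\T}$). A parallel argument, using some $u\in\vec{\V}$ of strictly larger frequency scale than $v$ supplied by $\vec{\V}\in\F^{**}$ (and after suitable subcasework depending on the relative scales of $u$ and $s$), yields the reverse ordering. The incompatibility of the two orderings rules out the strict-scale case and completes $s=v$. The principal obstacle is the propagation step through (r7): converting partial frequency proximity in the $m$ indices of $A$ into global proximity in all $n$ coordinates. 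That is exactly where the non-degeneracy of $\Gamma'$, encoded in the unique solvability of the system~\eqref{solvablesystem}, enters the picture, and where the constants $C_0,C_1,\dots,C_4$ must be calibrated so that the output of (r7) yields genuine central-grid nesting rather than mere closeness.
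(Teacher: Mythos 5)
Your strategy is recognizably the paper's (shared standard projections give frequency proximity at $m$ indices, the non-degeneracy of $\Gamma'$ propagates it to all $n$ coordinates, and the central grid plus a larger-scale multi-tile of one tree forces $\lesssim$ relations), but there are two genuine gaps. The first is in the propagation step: applying (r7) to the quadruple $(t,t,v,v)$ requires control of $\dist(\omega_{t_{i_l}},\omega_{v_{i_l}})$ at the $k-1$ distinct indices $i_2,\ldots,i_k$, since with this quadruple both the second and the third block of $D_{max}$ in \eqref{t46e7wncgd} involve the same pair $(t,v)$ and only the first block degenerates. The hypothesis supplies proximity at only $m$ indices, and $k-1\le m$ essentially only in the case $k=3$; for $d=2$ and fractional rank one has $k=2m-1$, so already for $k\ge 5$ (e.g.\ $k=5$, $n=6$, a case the proposition must cover) your application of (r7) asks for more indices than you have, and the parenthetical appeal to ``analogous non-degeneracy conditions'' is not something (r7), as stated, delivers. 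The paper avoids this by choosing the quadruple so that a whole index block is wasted on a trivial pair: it runs the argument of Proposition \ref{p:skhxghduyetd78436r2843908} on the pairs $(\vec{\T},\vec{\T})$ and $(\vec{\T},\vec{\V})$, i.e.\ on the tops $(s,s,s,v)$, for which $D_{max}$ only involves the pair of tops at the $k-m+1=m$ indices $i_1,i_{m+1},\ldots,i_k$ --- exactly matching the $m$ shared projections.

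The second gap is the endgame. Having obtained $t\lesssim v$ (and $t\lesssim s$), you switch to a selection-order contradiction, and the ``reverse ordering'' half does not go through: it needs some $u\in\vec{\V}$ with $u\lesssim s$, but the non-top multi-tiles of $\vec{\V}$ need not contain $\vec{x}$, there is no reason for $R_u\subseteq R_s$ (one only knows $R_u\subseteq R_v$, and $R_s$ is a strictly smaller subcube of $R_v$), and the frequency scale of $u$ relative to $s$ is uncontrolled, so the promised subcasework has no visible resolution. The detour is also unnecessary: from $t\lesssim s$ and $t\lesssim v$ one gets $t_i\lesssim s_i$ and $t_i\lesssim v_i$ for every $1\le i\le n$, and since tiles lying in distinct trees of $\F_i(l_i)$ are incomparable under $\lesssim$, the tiles $t_i$, $s_i$, $v_i$ all lie in the same tree of $\F_i(l_i)$; hence $\vec{\T}$ and $\vec{\V}$ share all $n$ standard projections, and the maximality observation recorded just before Lemma \ref{lemakatztao} (knowledge of all standard projections determines the vector tree) yields $\vec{\T}=\vec{\V}$ directly. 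This is how the paper concludes, and it replaces your entire final paragraph.
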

\begin{proof}
Let $\vec{\T},\vec{\V}\in \F^{**}(\vec{l},\vec{x})$ such that $f_i(\vec{T})=f_i(\vec{\V})$ for each $i\in\{1,\ldots,m\}$. Follow exactly the same approach as in the proof of Proposition \ref{p:skhxghduyetd78436r2843908}, this time applied to the pairs $(\vec{\T},\vec{\T})$ and $(\vec{\T},\vec{\V})$.
\end{proof}

At this point the proof of \eqref{estforcountfunction1} is complete, and the first stage of our construction ends.

In the second stage, we will have to deal with the collections of vector trees $\F^{*}(\vec{l}):=\F(\vec{l})\setminus\F^{**}(\vec{l})$ each of which consists of just one multi-tile. We will denote by $\bar{\P}(\vec{l})$ the collection of all these multi-tiles. The additional key property that these multi-tiles will obey is
\begin{equation}
\label{keyproprthy}
s\not=s'\in \bar{\P}(\vec{l})\implies s,s'\hbox{ are not comparable under }\lesssim.
\end{equation}
This is a consequence of the maximality involved in the selection of the vector trees from the previous stage. \eqref{keyproprthy} will turn out to be crucial in proving the analog of Proposition \ref{knowmknowall},  see Proposition \ref{p:skhxghduyetd78436r2843908secondversion} below.

 The plan is to reshuffle $\bar{\P}(\vec{l})$ into convenient vector trees. To achieve this, we will first split each $\bar{\P}_i(\vec{l})$ into overlapping trees (interestingly, from now on, the lacunary trees will not play any role). For each $r\in\Z$ with $2^{-r}\le \size_{i}(\bar{\P}_i(\vec{l}))$, let
\begin{equation}
\label{defertfedderdedrdsllkjj}
\bar{\P}_i^{(r)}(\vec{l}):=\{s_i\in \bar{\P}_i(\vec{l}):2^{-r}\le|R_{s_i}|^{-1/2}|\langle F_i,\phi_{s_i}\rangle|<2^{-r+1}\}.
\end{equation}
Note that
$$\bar{\P}_i(\vec{l})=\bigcup_{2^{-r}\le \size_{i}(\bar{\P}_i(\vec{l}))}\bar{\P}_i^{(r)}(\vec{l}).$$

We next use a greedy selection algorithm as before to split each $\bar{\P}_i^{(r)}(\vec{l})$ into a collection $\F_i(r,\vec{l})$ of disjoint $i-$overlapping trees with top tiles pairwise not comparable with respect to $\le$. This implies via a classical $TT^*$ argument (see for example Corollary 7.6. in \cite{MTT1}) that
\begin{equation}
\label{anotherBesselinBMO}
\|N_{\F_i(r,\vec{l})}(x)\|_{\BMO}\lesssim 2^{2r}[\sup_{s_i\in\bar{\P}_i^{(r)}(\vec{l})}\inf_{\vec{x}\in R_{s_i}}M_2F_i(\vec{x})]^2.
\end{equation}
Also, \eqref{defertfedderdedrdsllkjj} will imply that $N_{\F_i(r,\vec{l})}$ is supported in the set $\{\vec{x}:M_2F_i(\vec{x})\gtrsim 2^{-r}\}$. Combining this with \eqref{anotherBesselinBMO} and John-Nirenberg's inequality we get
\begin{equation}
\label{anotherBesselinLq}
\|N_{\F_i(r,\vec{l})}(x)\|_{t}\lesssim 2^{2r}[\sup_{s_i\in\bar{\P}_i^{(r)}(\vec{l})}\inf_{\vec{x}\in R_{s_i}}M_2F_i(\vec{x})]^2[2^{p_ir}\|F_i\|_{p_i}^{p_i}]^{1/t},\;1\le t<\infty.
\end{equation}

We then apply Fefferman's trick again, as in the first stage of our construction, this time however with respect to $\F_i(r,\vec{l})$ and $\le$ (rather than $\lesssim$).

For each $q\ge 0$, let
$$\bar{\P}_i^{(r)}(\vec{l},q):=\{s_i\in\bar{\P}_i^{(r)}(\vec{l}):2^q\le |\{\T\in\F_i(r,\vec{l}):\xi_{\T}\in \omega_{s_i},\;R_{s_i}\subseteq R_{\T}\}|<2^{q+1}\}.$$
Note that $(\bar{\P}_i^{(r)}(\vec{l},q))_{q\ge 0}$ forms a partition of $\bar{\P}_i^{(r)}(\vec{l})$. Next, we organize each $\bar{\P}_i^{(r)}(\vec{l},q)$ into $i-$overlapping trees with top tiles. More precisely, consider the collection $\bar{\P}_i^{(r,*)}(\vec{l},q)\subset \bar{\P}_i^{(r)}(\vec{l},q)$ of all tiles which are maximal with respect to the order relation $\le$.

It follows as before that the tiles in $\bar{\P}_i^{(r,*)}(\vec{l},q)$ are pairwise  not comparable under $\le$ and that for each tile  $s_i\in\bar{\P}_i^{(r)}(\vec{l},q)$ there is a unique tile $s_i^{*}\in\bar{\P}_i^{(r,*)}(\vec{l},q)$ such that $s_i\le s_i^{*}$.

Now, for each $s_i^{*}\in \bar{\P}_i^{(r,*)}(\vec{l},q)$ we form the $i-$overlapping tree with top $(R_{s_i^{*}},c(\omega_{s_i^{*}}))$ consisting of all tiles in $\bar{\P}_i^{(r)}(\vec{l},q)$ which are $\le s_i^{*}$. As before,  these trees partition $\bar{\P}_i^{(r)}(\vec{l},q)$ and  tiles in distinct trees are not comparable under $\le$. Call the collection of these trees $\F_i(r,\vec{l},q)$, and note that
\begin{equation}
\label{chvhcggyeuiwqywqet36e23dejkjrgutuio}
N_{\F_i(r,\vec{l},q)}(\vec{x})\le N_{\F_i(r,\vec{l})}(\vec{x}),\;\;\vec{x}\in\R^2.
\end{equation}

For each vectors $\vec{q}$ and $\vec{r}$ let $\F(\vec{r},\vec{l},\vec{q})$ consist of all the multi-tiles $s$ with $s_i\in \bar{\P}_i^{(r_i)}(\vec{l},q_i)$ for each $i$. This consideration is motivated by \eqref{keyproprthy}, which implies that each vector tree in $\bar{\P}(\vec{l})$ can have only one multi-tile. In the following, we will prove that
\begin{equation}
\label{chvhcggyeuiwqyduy3e8394945056=-067}
N_{\F(\vec{r},\vec{l},\vec{q})}(\vec{x})\le \left(\prod_{i=1}^{k}N_{\F_i(r_i,\vec{l},q_i)}(\vec{x})\right)^{1/2},\;\;\vec{x}\in\R^2.
\end{equation}
Using this, \eqref{chvhcggyeuiwqywqet36e23dejkjrgutuio} and the fact that for each $\vec{x}\in\R^2$, $N_{\F_i(r_i,\vec{l},q_i)}(\vec{x})=0$ if $2^{q_i}>N_{\F_i(r_i,\vec{l})}(\vec{x})$, we find that for each $\epsilon>0$
\begin{equation*}
N_{\F(\vec{r},\vec{l})}(\vec{x})\lesssim_{\epsilon}\left(\prod_{j=1}^{k}N_{\F_i(r_i,\vec{l})}(\vec{x})\right)^{1/2+\epsilon},\;\;\vec{x}\in\R^2.
\end{equation*}
Here and in the following,
$$\F(\vec{r},\vec{l}):=\bigcup_{\vec{q}}\F(\vec{r},\vec{l},\vec{q}).$$
Again, by permuting indices and since $k<n$, we get
\begin{equation}
\label{chvhcggyeuiwqyduy3e8394945056=-067asxwqr3466uuk}
N_{\F(\vec{r},\vec{l})}(\vec{x})\lesssim\left(\prod_{i=1}^{n}N_{\F_i(r_i,\vec{l})}(\vec{x})\right)^{\delta},\;\;\vec{x}\in\R^2,
\end{equation}
for some $\delta<1/2$.
We mention  that \eqref{chvhcggyeuiwqyduy3e8394945056=-067asxwqr3466uuk} will later be used in conjunction with \eqref{anotherBesselinLq} and with the upper bound \eqref{wdwer4t6hestforcountfunction1} on the $l_i$.

We next prove \eqref{chvhcggyeuiwqyduy3e8394945056=-067}. The argument follows exactly the same scheme as in the previous stage of the decomposition, and we sketch it briefly. We denote by
$X:=\F(\vec{r},\vec{l},\vec{q},\vec{x})$ the collection of all vector trees  $\vec{\T}\in\F(\vec{r},\vec{l},\vec{q})$ with $\vec{x}\in R_{\vec{\T}}$. For a vector tree $\vec{\T}$ we will now denote by $f_i(\vec{\T})$ the tree from the collection $\F({r_i},\vec{l},{q_i},\vec{x})$ which contains the tiles $s_i$, for $s\in\vec{\T}$. It will follow as in the previous stage of the construction that these projections are well defined. Define $\H$ and $H$ as before, more precisely
$$H:\H\to \prod_{j=2}^{k}\F_{j}(r_{j},\vec{l},q_{j},\vec{x}).$$
\begin{proposition}
\label{p:skhxghduyetd78436r2843908secondversion}
The function $H$ is injective.
\end{proposition}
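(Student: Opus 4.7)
The proof follows the same scheme as Proposition \ref{p:skhxghduyetd78436r2843908}, with two key adaptations: the ``smaller-scale tile'' used inside a vector tree in the previous proof is unavailable here (each vector tree is now a singleton multi-tile), so we substitute a direct application of (r7); and the concluding step uses \eqref{keyproprthy} rather than forcing standard projections to coincide. Suppose $(\vec{\T}_1, \vec{\T}_2) = (s, s')$ and $(\vec{\V}_1, \vec{\V}_2) = (p, p')$ both lie in $\H$ with $H(s, s') = H(p, p')$. The hypothesis gives $f_1(s) = f_1(s')$, $f_1(p) = f_1(p')$, $f_j(s) = f_j(p)$ for $2 \le j \le m$, and $f_j(s') = f_j(p')$ for $m+1 \le j \le k$. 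Each such equality places the two tiles in a common overlapping tree, so their $\omega$-components share that tree's top frequency, and the corresponding component distances vanish. With the choice $i_l = l$ in \eqref{t46e7wncgd}, the quantity $D_{max}$ collapses to $\max\{\diam(\omega_s), \diam(\omega_{s'}), \diam(\omega_p), \diam(\omega_{p'})\}$, and (r7) yields
\[
\dist(\omega_s, \omega_p),\ \dist(\omega_{s'}, \omega_{p'}) \le C_0^5 D_{max}.
\]

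Assume without loss of generality that $\diam(\omega_s) = D_{max}$. Since $\vec{x}$ lies in each of $R_s, R_{s'}, R_p, R_{p'}$ and the $\omega$-components within every multi-tile have comparable sidelength, $R_s$ has the smallest spatial scale; in particular, $R_s \subseteq R_p$. For each $j$, the estimate $\dist(\omega_{p_j}, \omega_{s_j}) \le C_0^5 |\omega_{s_j}|$, together with $|\omega_{p_j}| \lesssim |\omega_{s_j}|$ and $C_0^5 \ll C_1$, forces $\omega_{p_j} \subseteq \tilde{\omega}_{s_j}$. Since also $\omega_{p_j} \subseteq \tilde{\omega}_{p_j}$, the central grid cubes $\tilde{\omega}_{p_j}$ and $\tilde{\omega}_{s_j}$ both contain $\omega_{p_j}$, so ($\G1$) renders them nested; the size comparison $|\tilde{\omega}_{p_j}| \le |\tilde{\omega}_{s_j}|$ then yields $\tilde{\omega}_{p_j} \subseteq \tilde{\omega}_{s_j}$. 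Combined with $R_s \subseteq R_p$, this gives $s \lesssim p$ at the multi-tile level, and \eqref{keyproprthy} forces $s = p$.

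Given $s = p$, the remaining data become $f_j(s') = f_j(p')$ for $j \in \{1, m+1, \ldots, k\}$, a set of size $k - m + 1 \ge m$ (since $m = \lceil k/2 \rceil$). Because $\vec{x} \in R_{s'} \cap R_{p'}$, the cubes $R_{s'}$ and $R_{p'}$ are nested; assume $R_{p'} \subseteq R_{s'}$, so that $|\omega_{p'_j}| \ge |\omega_{s'_j}|$ for every $j$. For each of the $m$ indices above, $\omega_{s'_j} \cap \omega_{p'_j} \ne \emptyset$, and grid nesting (r4) yields $\omega_{s'_j} \subseteq \omega_{p'_j}$. Hence $p'_j \le s'_j$ for these $m$ indices, and property (r2) promotes this to $p' \lesssim s'$. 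A second invocation of \eqref{keyproprthy} delivers $p' = s'$; the opposite case $R_{s'} \subseteq R_{p'}$ is symmetric.

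The main obstacle is the first reduction, $s = p$. In the proof of Proposition \ref{p:skhxghduyetd78436r2843908} a tile at a strictly larger frequency scale inside $\vec{\T}_1$ supplied a $\tilde{\omega}$-container for $\tilde{\omega}_p$; that mechanism is unavailable when every vector tree is a singleton, and must be replaced by the interplay between (r7) and the central grid property. This makes the argument depend delicately on the scale hierarchy $C_0 \ll C_1 \ll C_2 \ll C_4$: a weaker form of (r7), or insufficient room between $C_0$ and $C_1$, would break the crucial inclusion $\omega_{p_j} \subseteq \tilde{\omega}_{s_j}$ and hence the entire reduction.
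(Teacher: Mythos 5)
Your proof is correct and follows essentially the same route as the paper's: vanishing of the componentwise distances from the shared overlapping-tree projections collapses $D_{max}$, (r7) plus the central grid property (r5) and the choice of the largest frequency scale give $s\lesssim p$, and \eqref{keyproprthy} forces $s=p$; then $m$ shared projections, (r2) and \eqref{keyproprthy} again give $s'=p'$. Your closing remark about the stronger $D_{max}$ bound (overlapping trees, $\le$ versus $\lesssim$) being the key difference from Proposition \ref{p:skhxghduyetd78436r2843908} is exactly the point the paper makes.
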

\begin{proof}
The proof of the injectivity of $H$  follows the same lines as the one of Proposition \ref{p:skhxghduyetd78436r2843908}, with only one key difference.
Assume that
$$(\vec{\T}_1:=\{s\},\vec{\T}_2:=\{s'\}),(\vec{\V}_1:=\{p\},\vec{\V}_2:=\{p'\})\in \H$$
have the same value under $H$. We will prove that $(s,s')=(p,p')$.
Let $D_{max}$ be as in \eqref{t46e7wncgd}. Our hypothesis will easily imply that
$$D_{max}\le C_0\max\{\diam(\omega_s),\diam(\omega_{s'}),\diam(\omega_p),\diam(\omega_{p'})\}.$$
This estimate is stronger than the one in \eqref{194574857889fdhjhgjfhjrtr7863436r3hgf}, in that on the right hand side here we have the diameters of the cubes $\omega$, rather than those of the cubes $\tilde{\omega}$. This is due to the fact that the standard projections now reflect positioning with respect to $\le$, rather than $\lesssim$. By (r7) we get
\begin{equation}
\label{fhjdhferuifyrfy748685656097secondv}
\dist(\omega_s,\omega_p), \dist(\omega_{s'},\omega_{p'})\le C_0^6\max\{\diam(\omega_s),\diam(\omega_{s'}),\diam(\omega_p),\diam(\omega_{p'})\}.
\end{equation}

We choose one of the four vector trees whose frequency component has the largest scale. To fix notation, we can assume without any loss of generality that this vector tree is $s$.  Observe that \eqref{fhjdhferuifyrfy748685656097secondv} shows that $\tilde{\omega}_s$ and $\tilde{\omega}_p$ must intersect, if $C_0<<C_1$. But then (r5) implies that $\tilde{\omega}_p\subseteq \tilde{\omega}_s$. This combined with the fact that $\vec{x}\in R_s\cap R_p\not=\emptyset$ implies that $s\lesssim p$. From \eqref{keyproprthy} we immediately get that $s=p$. This in turn implies that $s'$ and $p'$ share at least $m$ standard projections. This is equivalent with saying that $s'_i\le p'_i$ (or vice versa), for at least $m$ values of $i$. From (r2) we get that $s'\lesssim p'$ (or vice versa). A final invocation of \eqref{keyproprthy} concludes that $s'=p'$.
\end{proof}

\subsection{The case $d>2$}
\label{secd>2}

The case of arbitrary $d$ follows by considerations very similar to the ones involved in the case $d=2$. The rank properties (very much in spirit like (r7)), that will be needed throughout the proof will not be stated explicitly this time, but will rather become clear from the non-degeneracy assumptions on $\Gamma'$ that will be stated in each case.
We briefly sketch the details.

Recall we are under the assumption $m\ge 2$. We can in addition assume that $m\ge n/2$ (and thus $k/d<m$). This is because if $m<n/2$, then, as explained earlier, one can crudely treat the operator as having integral rank $m$, and apply the methods from \cite{MTT1} (or alternatively, the approach described in Section \ref{$d=1$}).

We will need the general case of the combinatorial lemma from \cite{KT}.

\begin{lemma}
\label{./x,.ewuf490389}
Let $X$ and $A_1,\ldots,A_{d-1}$ be finite sets and for each $1\le i\le d-1$ let $g_i:X\to A_i$ be a function. Then
$$|\{(x_1,\ldots,x_{d})\in X^{d}:g_i(x_{i})=g_i(x_{i+1})\hbox{ for all }1\le i\le d-1\}|\ge \frac{|X|^{d}}{\prod_{i=1}^{d-1}|A_i|}.$$
\end{lemma}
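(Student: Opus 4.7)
The plan is to use the entropy method. Define a Markov chain $(X_1,\ldots,X_d)$ on $X$ by taking $X_1$ uniform on $X$ and, given $X_i$, drawing $X_{i+1}$ uniformly from the fiber $g_i^{-1}(g_i(X_i))$. A short direct calculation shows that each $X_i$ is marginally uniform on $X$: if $X_i$ is uniform then
$$P[X_{i+1}=y] = \frac{1}{|X|}\sum_{x: g_i(x)=g_i(y)} \frac{1}{|g_i^{-1}(g_i(y))|} = \frac{1}{|X|},$$
so by induction all $X_i$ are uniform on $X$. In particular the joint distribution is supported exactly on the target set
$$T := \{(x_1,\ldots,x_d)\in X^d : g_i(x_i)=g_i(x_{i+1})\text{ for each }1\le i\le d-1\},$$
whose cardinality is the quantity to be lower-bounded.

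Since the uniform distribution on $T$ maximizes the Shannon entropy among laws supported on $T$, one has $\log |T| \ge H(X_1,\ldots,X_d)$. By the Markov property and the chain rule,
$$H(X_1,\ldots,X_d) = \log|X| + \sum_{i=1}^{d-1} H(X_{i+1}\mid X_i).$$
Given $X_i=x$, $X_{i+1}$ is uniform on $g_i^{-1}(g_i(x))$, hence $H(X_{i+1}\mid X_i=x) = \log|g_i^{-1}(g_i(x))|$. Averaging over $X_i$ uniform on $X$ and writing $p_a := |g_i^{-1}(a)|/|X|$ for the pushforward probability distribution on $A_i$,
$$H(X_{i+1}\mid X_i) = \frac{1}{|X|}\sum_a |g_i^{-1}(a)|\log|g_i^{-1}(a)| = \log|X| - H_{\mathrm{Sh}}(p) \ge \log|X| - \log|A_i|,$$
using the standard maximum-entropy bound $H_{\mathrm{Sh}}(p)\le \log|A_i|$. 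Summing the conditional entropies and exponentiating $H(X_1,\ldots,X_d) \ge d\log|X| - \sum_i \log|A_i|$ yields the desired inequality.

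The main conceptual point that the entropy approach finesses is a correlation barrier in the more obvious inductive proof by Cauchy-Schwarz. Peeling off the last constraint gives $N_d = \sum_{x_{d-1}} c(x_{d-1}) |g_{d-1}^{-1}(g_{d-1}(x_{d-1}))|$ with $c(\cdot)$ the inductive chain count, and to conclude $N_d \ge (|X|/|A_{d-1}|)\cdot \sum_{x} c(x)$ one would need positive correlation between $c$ and the fiber size $|g_{d-1}^{-1}(g_{d-1}(\cdot))|$ on $X$, a condition that can fail (simple examples show strict anti-correlation, with the naive Cauchy-Schwarz falling short by a factor of $|A_i|$). The entropy argument bypasses this by replacing the pointwise correlation requirement with Jensen's inequality applied to the concave logarithm independently at each transition of the Markov chain.
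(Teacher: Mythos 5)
Your argument is correct: the Markov chain you define is supported on the set being counted, each marginal is indeed uniform on $X$ (your one-line computation is right), the chain rule together with the Markov property gives $H(X_1,\ldots,X_d)=\log|X|+\sum_{i=1}^{d-1}H(X_{i+1}\mid X_i)$, and the bound $H(X_{i+1}\mid X_i)\ge\log|X|-\log|A_i|$ follows from the support bound on the Shannon entropy of the pushforward distribution exactly as you say; exponentiating $\log|T|\ge H(X_1,\ldots,X_d)$ yields the stated inequality (empty fibers contribute nothing, and the degenerate case $X=\emptyset$ is trivial). Be aware, though, that the paper itself offers no proof of Lemma \ref{./x,.ewuf490389}: it is imported as the general case of the combinatorial lemma from \cite{KT}, whose rank-one instance, Lemma \ref{lemakatztao}, is the one-line Cauchy--Schwarz estimate $\sum_a|g^{-1}(a)|^2\ge|X|^2/|A|$. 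So your entropy/random-walk argument is a self-contained replacement rather than a variant of an in-paper proof, and what it buys is precisely what your closing paragraph identifies: the chain version is not obtained by peeling off constraints one at a time, since the step $N_d\ge(|X|/|A_{d-1}|)N_{d-1}$ can genuinely fail --- for instance with $d=3$, one $g_1$-fiber covering three quarters of $X$ and the rest $g_1$-singletons, while $g_2$ splits that big fiber into singletons and lumps its complement into a single fiber --- although your parenthetical claim that simple examples fall short ``by a factor of $|A_i|$'' overstates the loss exhibited by such examples. The Jensen-type averaging hidden in your conditional-entropy computation is exactly the convexity input that the naive induction lacks.
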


The two stages of the construction are the same as in the case $d=2$, but the choice of the function $H$ involves some modifications. As in the case $d=2$, $H$ will have the same formula in both stages of the reshuffling process, and proving its injectivity will involve very similar ideas. Thus, to fix notation, we only sketch the argument corresponding to the first stage. Take $X=\F^{**}(\vec{l},\vec{x})$. Note that $m\le n+1$ (since $m\le \frac{n+1}{2}$ and $n\ge 3$), so the sets $\F_j(l_j,\vec{x})$ are defined for each $1\le j\le m+1$.
We distinguish three separate cases, and will address each of them below.

\textbf{Case 1: $n$ is even}. The first case we describe is when $n$ is even. It follows that $m\le \frac{n}{2}$.  Without loss of generality we can assume that
\begin{equation}
\label{dfweytyrfw478rt7e47fjkdhjvjhy}
\max\{|\F_j(l_j,\vec{x})|:1\le j\le m\}=|\F_1(l_1,\vec{x})|\le\min\{|\F_j(l_j,\vec{x})|:m+1\le j\le n\}.
\end{equation}
Apply Lemma \ref{./x,.ewuf490389} with $A_j=\F_1(l_1,\vec{x})$, $g_j=f_1$. Also, define
$$\H:=\{(\vec{\T_1},\ldots,\vec{\T_d})\in X^d:g_j(\vec{\T}_{j})=g_j(\vec{\T}_{j+1}),1\le j\le d-1\},$$
$$H:\H\to \prod_{j}B_j,$$
where each $B_j$ equals one of the sets $\F_i(l_i,\vec{x})$, and
$$H(\vec{\T}_1,\ldots,\vec{\T}_d):=$$ $$(f_2(\vec{\T}_1),f_3(\vec{\T}_1),\ldots,f_{m}(\vec{\T}_1),f_{2}(\vec{\T}_2),f_{3}(\vec{\T}_2),\ldots,f_{m}(\vec{\T}_2),\ldots,
f_2(\vec{\T}_d),f_3(\vec{\T}_d),\ldots,f_{m}(\vec{\T}_d))$$

We briefly comment on this construction. It is one of many one can do, and while the non-degeneracy requirements to make a particular choice of $H$ injective will depend on $H$, they are achieved for generic $\Gamma'$. For example, we chose to assign entries of the form $f_2,\ldots,f_m$ to each of the trees $\vec{\T}_j$, but we could also have chosen instead to alternate between these entries and the entries  $f_{m+1},\ldots,f_{2m-1}$, as we did in the case $d=2$. The only restrictions are that the entries corresponding to each tree $\vec{\T}_j$ are  pairwise distinct (otherwise redundancy occurs), and that the entries for $\vec{\T}_j$ are also distinct from  $f_1(\vec{\T}_j)$.

Let us assume for the moment that $H$ is injective. Combining this with Lemma \ref{./x,.ewuf490389} we get that
$$|\F^{**}(\vec{l},\vec{x})|\le (|\F_1(l_1,\vec{x})|^{d-1}\prod_{j=2}^{m}|\F_j(l_j,\vec{x})|)^{1/d}.$$
Due to our assumption \eqref{dfweytyrfw478rt7e47fjkdhjvjhy}, one can easily check that this implies
\begin{equation}
\label{dfbrhgf3476r281390434dl;e,cerl;gio5=-y0456y7hlty,nl}
|\F^{**}(\vec{l},\vec{x})|\le (\prod_{j=1}^{n}|\F_j(l_j,\vec{x})|)^{\delta},
\end{equation}
for some $\delta<1/2$, which as explained earlier, is the desired inequality.

Let us now see why $H$ is injective. Assume that $(\vec{\T}_1,\ldots,\vec{\T}_d),(\vec{\V}_1,\ldots,\vec{\V}_d)\in\H$ have the same value under $H$. The non-degeneracy condition that we need is that the following system
\begin{equation}
\label{solvablesystemsecondversionttt}
\begin{cases} G_1(\vec{\xi}^{(1)})-G_1(\vec{\xi}^{(2)})=v_1
\\ G_1(\vec{\xi}^{(2)})-G_1(\vec{\xi}^{(3)})=v_2
\\ \dots\ldots\ldots
\\ G_{1}(\vec{\xi}^{(d-1)})-G_1(\vec{\xi}^{(d)})=v_{d-1}
\\ G_2(\vec{\xi}^{(1)})=v_{1,2}\\ \ldots\ldots\ldots \\ G_{m}(\vec{\xi}^{(d)})=v_{d,m}
\end{cases}
\end{equation}
has {\em at most one solution}, for each choice of targets $v_1,\ldots,v_{d-1}$ and $v_{i,j}$, $1\le i\le d$, $2\le j\le m$. Note that there are $dm-1$ $\R^d$ valued equations in $d$ unknowns $\vec{\xi}^{(j)}\in\R^k$, and that we have $(dm-1)d\ge dk$, since $m>\frac{k}{d}$. Thus the system above is always overdetermined, and we require that the matrix associated with it has maximum rank $dk$. It is not too hard to check that this is achieved for a generic choice of $\Gamma'$. Actually, our choice of $H$ is in such a way, that the system consisting of the first $k$ vector valued equations above will generically give rise to a compatible system.

The injectivity of $H$ now follows as in the case $d=2$. First, there must be some $i_0$ such that either $\vec{\T}_{i_0}$ or $\vec{\V}_{i_0}$ contains a multi-tile with the frequency  scale larger than or equal to the scales of all the multi-tiles from the trees $\vec{\T}_{i}, \vec{\V}_{i}$. As before, we get that $\vec{\T}_{i_0}=\vec{\V}_{i_0}$. The equality $\vec{\T}_{i}=\vec{\V}_{i}$ for the remaining indices $i$ will follow from a "domino effect". It first follows for the neighboring indices $i=i_0-1$ and/or $i=i_0+1$, using the fact that $f_1(\vec{\T}_{i})=f_1(\vec{\T}_{i_0})=f_1(\vec{\V}_{i_0})=f_1(\vec{\V}_{i})$, $f_j(\vec{\T}_{i})=f_j(\vec{\V}_{i})$ (for $j\in\{2,3,\ldots,m\}$), and using Proposition  \ref{knowmknowall}. The domino effect continues until all indices are covered.

\textbf{Case 2: $n$ is odd}. Since we have assumed that $m\ge n/2$,  it follows that $m=\frac{n+1}{2}$. In particular, we observe that whenever $i_1\notin\{i_2,\ldots,i_{2m-1}\}$ with $\{i_2,\ldots,i_m\}$ pairwise distinct and $\{i_{m+1},\ldots,i_{2m-1}\}$ pairwise distinct, the following system with $n=2m-1$ $\R^d$ valued equations in 2 unknowns $\vec{\xi}^{(1)}, \vec{\xi}^{(2)}\in \R^k$
\begin{equation}
\label{solvablesystemsecondversionhjuy}
\begin{cases} G_{i_1}(\vec{\xi}^{(1)})-G_{i_1}(\vec{\xi}^{(2)})=v_1
\\ G_{i_2}(\vec{\xi}^{(1)})=v_{1,2}\\
\ldots\ldots\ldots
\\G_{i_{m}}(\vec{\xi}^{(1)})=v_{1,m}
\\ G_{i_{m+1}}(\vec{\xi}^{(2)})=v_{2,m+1}\\
\ldots\ldots\ldots
\\ G_{i_{2m-1}}(\vec{\xi}^{(2)})=v_{2,2m-1}
\end{cases}
\end{equation}
will have at most one solution for a generic $\Gamma'$.
This is one of the two non-degeneracy conditions that will be needed in both of the following two subcases. We will refer to the above system as a {\em two-scheme}.

\textbf{Subcase 2a: $n$ is odd and $d$ is odd}.

Without loss of generality we can assume that
\begin{equation}
\label{dfweytyrfw478rt7e47fjkdhjvjhyfhjvghfkjghjkjgkjgkgjklgjkljthioihohirooptogp[ogp[to}
\max\{|\F_j(l_j,\vec{x})|:1\le j\le m+1\}=|\F_2(l_2,\vec{x})|\le\min\{|\F_j(l_j,\vec{x})|:m+2\le j\le n\}.
\end{equation}
Apply Lemma \ref{./x,.ewuf490389} with $A_j=\F_1(l_1,\vec{x})$ and $g_j=f_1$ if $j$ is odd, $A_j=\F_2(l_2,\vec{x})$ and $g_j=f_2$ if $j$ is even. Also, define
$$\H:=\{(\vec{\T_1},\ldots,\vec{\T_d})\in X^d:g_j(\vec{\T}_{j})=g_j(\vec{\T}_{j+1}),1\le j\le d-1\},$$
$$H:\H\to \prod_{j}B_j,$$
where each $B_j$ equals one of the sets $\F_i(l_i,\vec{x})$, and
$$H(\vec{\T}_1,\ldots,\vec{\T}_d):=$$
$$(f_3(\vec{\T}_1),f_4(\vec{\T}_1),\ldots,f_{m}(\vec{\T}_1),f_{3}(\vec{\T}_2),f_{4}(\vec{\T}_2),\ldots,f_{m}(\vec{\T}_2),\ldots,
f_3(\vec{\T}_d),f_4(\vec{\T}_d),\ldots,f_{m}(\vec{\T}_d),$$
$$f_{m+1}(\vec{\T}_1),f_{m+1}(\vec{\T}_3),\ldots,f_{m+1}(\vec{\T}_d)).$$
Note that $H$ has $(m-2)d+\frac{d+1}{2}$ entries. If $m=2$, then the first $(m-2)d$ entries are not present. Note also that the last line above contains $\frac{d+1}{2}$ entries of the form $f_{m+1}(\vec{\T}_i)$, for all possible odd indices $i$. Here is why we can not use more than $\frac{d+1}{2}$ such entries. Why more entries would certainly reinforce the injectivity of $H$, the application of the injectivity (combined with Lemma \ref{./x,.ewuf490389}) would be inefficient, in that it would not lead to \eqref{dfbrhgf3476r281390434dl;e,cerl;gio5=-y0456y7hlty,nl}. On the other hand, our choice for $H$ combined with the assumption \eqref{dfweytyrfw478rt7e47fjkdhjvjhyfhjvghfkjghjkjgkjgkgjklgjkljthioihohirooptogp[ogp[to} is easily seen to guarantee \eqref{dfbrhgf3476r281390434dl;e,cerl;gio5=-y0456y7hlty,nl}.

The reason we chose to assign entries $f_{m+1}(\vec{\T}_i)$ to the odd indices $i$ (as opposed to -say- the first $\frac{d+1}{2}$ indices) is to allow for the domino effect, as explained below.

Note also that in the definition of $g_j$ we chose to alternate between $f_1$ and $f_2$, to prevent certain redundancies from occurring. This will become clear in a moment.

In addition to the requirement that two-scheme \eqref{solvablesystemsecondversionhjuy} has at most one solution, we will also require that the system
\begin{equation}
\label{solvablesystemsecondversiontttsdute76rt346r43fjhgrgrg}
\begin{cases} G_1(\vec{\xi}^{(1)})-G_1(\vec{\xi}^{(2)})=v_1
\\ G_2(\vec{\xi}^{(2)})-G_2(\vec{\xi}^{(3)})=v_2
\\ \dots\ldots\ldots
\\ G_{2}(\vec{\xi}^{(d-1)})-G_{2}(\vec{\xi}^{(d)})=v_{d-1}
\\ G_{l}(\vec{\xi}^{(j)})=v_{j,l},\;\;3\le l\le m,\;1\le j\le d
\\ G_{m+1}(\vec{\xi}^{(1)})=v_{1,m+1}\\
\\ G_{m+1}(\vec{\xi}^{(3)})=v_{3,m+1}\\
\ldots\ldots\ldots \\
\\ G_{m+1}(\vec{\xi}^{(d)})=v_{d,m+1}
\end{cases}
\end{equation}
has at most one solution. Note that there are $d(m-\frac{1}{2})-\frac12$ $\R^d$ valued equations and $d$ unknowns in $\R^k$. Our assumption that $k/d<m$ will imply that the system is overdetermined, so our requirement is equivalent with saying that its matrix has maximum rank $dk$. As in the previous case, a generic choice of $\Gamma'$ will guarantee that the first $k$ equations above will give rise to a compatible system.

Let us now see why the function $H$ is injective. Assume that $(\vec{\T}_1,\ldots,\vec{\T}_d),(\vec{\V}_1,\ldots,\vec{\V}_d)\in\H$ have the same value under $H$. Again, by using the system \eqref{solvablesystemsecondversiontttsdute76rt346r43fjhgrgrg}, we first obtain that
$\vec{\T}_{i_0}=\vec{\V}_{i_0}$ for some $i_0$. There are two type of scenarios that will sustain the domino effect.

If $i_0$ happens to be even, its neighbor(s) $i$ will be odd, and thus we are guaranteed that $f_j(\vec{\T}_i)=f_j(\vec{\V}_i)$ for each $j\in\{3,\ldots,m+1\}$. However, since $i$ is a neighbor of $i_0$, it will also follow that $f_{j_0}(\vec{\T}_i)=f_{j_0}(\vec{\T}_{i_0})=f_{j_0}(\vec{\V}_{i_0})=f_{j_0}(\vec{\V}_{i})$, where $j_0$ is either 1 or 2, depending on whether $i=i_0-1$ or $i=i_0+1$. In any case, $\vec{\T}_{i}$ and $\vec{\V}_{i}$ will share $m$ standard projections, and thus will have to coincide, by
Proposition \ref{knowmknowall}.

The second scenario is when $i_0$ is odd. In this case we can not prove by following the same procedure that $\vec{\T}_{i}=\vec{\V}_{i}$, for a neighboring $i$. What we do instead is consider the two-scheme(s), one associated with indices $i_0+1,i_0+2$, the other one associated with indices  $i_0-1,i_0-2$. Each of  these two-schemes is of the form \eqref{solvablesystemsecondversionhjuy}. Indeed, $2m-2$ of the equations are going to come from \eqref{solvablesystemsecondversiontttsdute76rt346r43fjhgrgrg}, while the additional equation will be of the form
$$G_{1}(\vec{\xi}^{(i_{0}+1)})=w_{i_0+1,1}$$
for the first two-scheme and
$$G_{2}(\vec{\xi}^{(i_{0}-1)})=w_{i_0-1,2}$$
for the second two-scheme. In other words, in the case of the first scheme (with a similar situation for the second scheme) we know that  $\vec{\T}_{i_0+1}$ and $\vec{\V}_{i_0+1}$ share $m-1$ standard projections, $\vec{\T}_{i_0+2}$ and $\vec{\V}_{i_0+2}$ share  $m-1$ standard projections, and in addition, we recall  that $f_2(\vec{\T}_{i_0+1})=f_2(\vec{\T}_{i_0+2})$ and  $f_2(\vec{\V}_{i_0+1})=f_2(\vec{\V}_{i_0+2})$. The analysis of this two-scheme (essentially, a repeat of the argument from the case $d=2$) will imply that $\vec{\T}_{i}=\vec{\V}_{i}$, for $i\in\{i_0+1,i_0+2\}$.

If we allow  combinations of these scenarios, it is easy to see that the domino effect will eventually prove that $\vec{\T}_{i}=\vec{\V}_{i}$, for all $i$.

\textbf{Subcase 2a: $n$ is odd and $d$ is even}. This is the most delicate case. We will use the same construction as in the previous subcase, but with $d+1$ vector trees, rather than $d$. More precisely, without loss of generality we can assume that
$$
\max\{|\F_j(l_j,\vec{x})|:1\le j\le m+1\}=|\F_2(l_2,\vec{x})|\le\min\{|\F_j(l_j,\vec{x})|:m+2\le j\le n\}.
$$
Apply Lemma \ref{./x,.ewuf490389} (this time for $d+1$ sets) with $A_j=\F_1(l_1,\vec{x})$ and $g_j=f_1$ if $j$ is odd, $A_j=\F_2(l_2,\vec{x})$ and $g_j=f_2$ if $j$ is even. Here $j\in\{1,\ldots,d+1\}$. Also, define
$$\H:=\{(\vec{\T_1},\ldots,\vec{\T_{d+1}})\in X^{d+1}:g_j(\vec{\T}_{j})=g_j(\vec{\T}_{j+1}),1\le j\le d\},$$
$$H:\H\to \prod_{j}B_j,$$
where each $B_j$ equals one of the sets $\F_i(l_i,\vec{x})$, and
$$H(\vec{\T}_1,\ldots,\vec{\T}_{d+1}):=$$
$$(f_3(\vec{\T}_1),f_4(\vec{\T}_1),\ldots,f_{m}(\vec{\T}_1),f_{3}(\vec{\T}_2),f_{4}(\vec{\T}_2),\ldots,f_{m}(\vec{\T}_2),\ldots,
f_3(\vec{\T}_{d+1}),f_4(\vec{\T}_{d+1}),\ldots,f_{m}(\vec{\T}_{d+1}),$$
$$f_{m+1}(\vec{\T}_1),f_{m+1}(\vec{\T}_3),\ldots,f_{m+1}(\vec{\T}_{d+1})).$$
In addition to requiring that the two-schemes \eqref{solvablesystemsecondversionhjuy} have at most one solution, we will also need that the following system (a copy of \eqref{solvablesystemsecondversiontttsdute76rt346r43fjhgrgrg} with $d\mapsto d+1$)
\begin{equation}
\label{solvablesystemsecondversiontttsdute76rhhhhhhhhfffvvv}
\begin{cases} G_1(\vec{\xi}^{(1)})-G_1(\vec{\xi}^{(2)})=v_1
\\ G_2(\vec{\xi}^{(2)})-G_2(\vec{\xi}^{(3)})=v_2
\\ \dots\ldots\ldots
\\ G_{2}(\vec{\xi}^{(d-1)})-G_{2}(\vec{\xi}^{(d)})=v_{d-1}
\\ G_l(\vec{\xi}^{(j)})=v_{1,3},\;\;3\le l\le m,\;1\le j\le d+1
\\ G_{m+1}(\vec{\xi}^{(1)})=v_{1,m+1}\\
\\ G_{m+1}(\vec{\xi}^{(3)})=v_{3,m+1}\\
\ldots\ldots\ldots \\
\\ G_{m+1}(\vec{\xi}^{(d+1)})=v_{d+1,m+1}
\end{cases}
\end{equation}
has at most one solution. There are $(d+1)(m-\frac{1}{2})-\frac12$ $\R^d$ valued equations in $d+1$ variables from $\R^k$. The fact that the system is overdetermined
$$[(d+1)(m-\frac{1}{2})-\frac12]d\ge (d+1)k$$
is a consequence of the fact that $\frac{k}{d}<m-\frac12$.

The argument will then run as in the previous subcase. We leave details to the interested reader.

\subsection{The case $d=1$}
\label{$d=1$}

As advertised earlier, a simplified version of the combinatorial argument in Section \ref{The case $d=2$} can also handle the case $d=1$, and more generally, the case of arbitrary $d$ and $m<n/2$. We present the argument in this generality, and thus assume $m<n/2$, rather than $d=2$. We note again that the argument here  reproves the main Theorem in \cite{MTT1}, in the locally $L^2$ case, without any appeal to induction.

 We perform the same two stage decomposition, exactly as in Section \ref{The case $d=2$}. This time however the function $H$ will have a simpler form. More precisely, in the first stage one considers functions of the form
$$H:\F^{**}(\vec{l},\vec{x})\to \prod_{j=1}^m\F_j(l_j,\vec{x})$$ given by
$$H(\vec{\T}):=(f_1(\vec{\T}),\ldots,f_m(\vec{\T})),$$
with an identical construction (up to notation) for the second stage. The injectivity of $H$ will follow from Proposition \ref{knowmknowall} in the first stage, and from (r2) in the second stage.

 The injectivity of $H$ will in turn imply the desired estimates \eqref{estforcountfunction1fgfdgfh} and \eqref{chvhcggyeuiwqyduy3e8394945056=-067asxwqr3466uuk}, since $m<n/2$.

 Note that Lemma \ref{lemakatztao} and Proposition \ref{p:skhxghduyetd78436r2843908} are no longer needed here, and as a consequence we do not need any further non-degeneracy conditions on $\Gamma'$, other than the one from  \cite{MTT1}, namely that $\Gamma'$ is parametrizable over any $k$ canonical variables.

\section{Proof of Theorem \ref{redu2}}

Assume $d=2$. The argument for $d>2$ would follow with no essential modification.

For each $\P\subset\S$ we will use the notation
$$\Lambda_{\P}(\vec{x})(F_1,\ldots,F_n):=\sum_{s\in\P}|R_s|^{-\frac{n}{2}}\prod_{i=1}^{n}|\langle F_i,\phi_{s_i}\rangle|1_{R_s}(\vec{x}).$$
By invoking interpolation and the dilation invariance of our operator, it suffices to prove that for each $2<p_i\le \infty$ with $\frac{1}{p_1}+\ldots+\frac{1}{p_n}=\frac{1}{p}$ and each $\|F_i\|_{p_i}=1$ we have
\begin{equation}
\label{hwsge6eydgtgxvxastxwqtlslw'qo0-o5495lp}
|\{\vec{x}:\Lambda_{\S}(F_1,\ldots,F_n)(\vec{x})\gtrsim 1\}|\lesssim 1.
\end{equation}
For the remaining part of the argument, the functions $F_i$ will be fixed as above, and all sizes will be computed with respect to them.

Consider the exceptional set
$$E=\bigcup_{i=1}^{n}\{\vec{x}:M_2F_i(\vec{x})\ge 1\},$$
and note that $|E|\lesssim 1.$

It now suffices to restrict attention in \eqref{hwsge6eydgtgxvxastxwqtlslw'qo0-o5495lp} to the collection (which for simplicity will also be denoted with $\S$) of multi-tiles $s$ which have the property that
$R_s\cap E^c\not=\emptyset$. Lemma \ref{sizeestbymaxf} will now imply that
$\size_i(\S_i)\lesssim 1$.

Apply now Proposition \ref{p:Bes2} to the collections $\S_i$. We get that
$$\S_i:=\bigcup_{2^{-k}\lesssim 1} \S^{(k)}_i,$$
where
\begin{equation}
\label{129eioavxzsdi34[rpkjkaDAR}
\size_i(\S^{(k)}_i)\le 2^{-k+1}
\end{equation}
and each $\S^{(k)}_i$ is the (disjoint) union of a family $\F^{(k)}_i$ of trees such that
\begin{equation}
\label{aza5ytqos90e-930i0lkjk'[}
\|N_{\F^{(k)}_i}\|_{q}\lesssim 2^{2k}2^{kp_i/q},\;1\le q<\infty.
\end{equation}

An immediate consequence of \eqref{aza5ytqos90e-930i0lkjk'[} (choose $q$ large enough) is that
$$|\{\vec{x}:N_{\F^{(k)}_i}(\vec{x})>2^{4k}\}|\lesssim 2^{-10k}.$$
By eliminating another exceptional set of measure $O(1)$, it thus suffices to further restrict attention in \eqref{hwsge6eydgtgxvxastxwqtlslw'qo0-o5495lp} to those $\vec{x}$ which satisfy
\begin{equation}
\label{jhjhwtq689y1uwevectortreeshihihihih}
N_{\F^{(k)}_i}(\vec{x})\le 2^{4k}
\end{equation}
for each $i$ and each $2^{-k}\lesssim 1$.

We fix some $k_i$ for each $i$, and denote by $\S(\vec{k})$ the collection of all multi-tiles $s$ with $s_i\in \S_i^{(k_i)}$ for each $i$. We follow the procedure described in Section \ref{The case $d=2$}, applied to $\P:=\S(\vec{k})$ and $\F_i:=\F^{(k_i)}_i$. The collection $\S(\vec{k})$ will be the union of three families of vector trees: $\F^{**}_{\vec{k}}$ from the first stage of the construction and
\begin{equation}
\label{vectortreeshihihihih}
\F^{*}_{\vec{k}}:=\bigcup_{\vec{l}:1\le 2^{l_i}\le 2^{4k_i}}\bigcup_{\vec{r}:2^{-r_i}\le 2^{-k_i+1}}\F_{\vec{k}}(\vec{r},\vec{l}),
\end{equation}
$$\F_{\vec{k}}:=\bigcup_{\vec{l}:2^{l_i}>2^{4k_i}}\bigcup_{\vec{r}:2^{-r_i}\le 2^{-k_i+1}}\F_{\vec{k}}(\vec{r},\vec{l})$$
from the second stage of the construction. Due  \eqref{wdwer4t6hestforcountfunction1} and \eqref{jhjhwtq689y1uwevectortreeshihihihih}, the family $\F_{\vec{k}}$ can be ignored, since it will not contribute to $\Lambda_{\S(\vec{k})}$.

Let $t$ be a sufficiently large number. We plan to evaluate $\|N_{\F^{**}_{\vec{k}}}\|_t$, and in doing so we will invoke \eqref{estforcountfunction1fgfdgfh}, \eqref{aza5ytqos90e-930i0lkjk'[} and H\"older's inequality:
$$\|N_{\F^{**}_{\vec{k}}}\|_t\lesssim \prod_{i=1}^{n}2^{k_i(2\delta+\frac{p_i}{nt})}.$$
Combining this with Lemma \ref{vecttreeest} and estimate \eqref{129eioavxzsdi34[rpkjkaDAR} for the size, we get
$$\|\Lambda_{\F^{**}_{\vec{k}}}\|_t\lesssim \prod_{i=1}^{n}2^{k_i(2\delta+\frac{p_i}{nt}-1)}.$$
Since $\delta<1/2$, it follows that
$$|\{\vec{x}:\Lambda_{\bigcup_{\vec{k}:2^{-k_i}\lesssim 1}\F^{**}_{\vec{k}}}(F_1,\ldots,F_n)(\vec{x})\gtrsim 1\}|\lesssim 1.$$
It now remains to evaluate the contribution coming from the vector trees in \eqref{vectortreeshihihihih}. Fix $\vec{r},\vec{l}$. As before, by using \eqref{anotherBesselinLq}, \eqref{chvhcggyeuiwqyduy3e8394945056=-067asxwqr3466uuk} and H\"older's inequality we get
$$\|N_{\F_{\vec{k}}(\vec{r},\vec{l})}\|_t\lesssim \prod_{i=1}^{n}2^{r_i(2\delta+\frac{p_i}{nt})}.$$
Combining this with Lemma \ref{vecttreeest} and estimate \eqref{defertfedderdedrdsllkjj} for the size we get
$$\|\Lambda_{\F_{\vec{k}}(\vec{r},\vec{l})}\|_t\lesssim \prod_{i=1}^{n}2^{r_i(2\delta+\frac{p_i}{nt}-1)}.$$
Summing this first over $2^{-r_i}\le 2^{-k_i+1}$, then over $0\le l_i\le 4k_i$ and finally over $2^{-k_i}\lesssim 1$ we get
$$|\{\vec{x}:\Lambda_{\bigcup_{\vec{k}}\F^{*}_{\vec{k}}}(F_1,\ldots,F_n)(\vec{x})\gtrsim 1\}|\lesssim 1.$$
This finishes the argument.

\end{document}